\renewcommand{\section}{\@startsection{section}{1}{0pt}{20pt}{6pt}{\large\bfseries}}
\numberwithin{equation}{section}
\theoremstyle{plain}
  \newtheorem{thm}{Theorem}[section]
  \newtheorem{prop}[thm]{Proposition}
   \newtheorem{cor}[thm]{Corollary}
\newtheorem{definition}[thm]{Definition}
  \newtheorem{remark}[thm]{Remark}
\newtheorem{exa}[thm]{Example}
\newcommand{\R}{\mathbb{R}}
\renewcommand{\Re}{{\mathfrak{Re}}}
\newcommand{\C}{\mathbb{C}}
\newcommand{\Q}{\mathbb{Q}}
\newcommand{\E}{\mathbb{E}}
\renewcommand{\P}{{\mathbb{P}}}
\newcommand{\Ex}{\Sigma}
\newcommand{\iex}{A}
\newcommand{\I}{\mathcal{I}}
\newcommand{\Id}[1]{{{\mathbb{I}}}_{\{#1\}}}
\begin{document}

\bibliographystyle{plain}

\title{A transformation for L\'evy processes with one-sided jumps and applications}

\author{M. Chazal}
   \address{Universit\'e Libre de Bruxelles,
Department of Mathematics,
Actuarial Sciences - CP210,
Boulevard du Triomphe,
B-1050 Bruxelles,
Belgium }
\email{mchazal@ulb.ac.be}

\author{A. Kyprianou}
\address{Department of Mathematical Sciences
University of Bath, Bath BA2 7AY, UK} \email{a.kyprianou@bath.ac.uk}

\author{P. Patie}
   \address{Universit\'e Libre de Bruxelles,
Department of Mathematics,
Actuarial Sciences - CP210,
Boulevard du Triomphe,
B-1050 Bruxelles,
Belgium }
\email{ppatie@ulb.ac.be}

\begin{abstract}
The aim of this work is to extend and study a family of transformations between Laplace exponents of L\'evy processes which have been introduced recently in a variety of different contexts, \cite{Patie-06-poch, Patie-08-ref-exp, KP, Gnedin}, as well as in older work of Urbanik \cite{Urbanik}. We show how some specific instances of this mapping prove to be useful for
a variety of applications.

\bigskip

\noindent{\it Key words:} Spectrally negative L\'evy process,  fluctuation theory, exponential functional, positive self-similar Markov process, intertwining, hypergeometric function.

\bigskip

\noindent{\it 2000 Mathematics Subject Classification:}  60G51, 60G18, 33C20

\end{abstract}

\date{}

\maketitle

\section{Introduction}
In this paper we are interested in a
  L\'evy process with no positive jumps, possibly independently killed at a constant rate, henceforth denoted by  $\xi = (\xi_t, t\geq 0)$ with law $\mathbb{P}$.
That is to say, under $\mathbb{P}$, $\xi$ is a stochastic process which has almost surely c\`adl\`ag paths, increments that are stationary and independent and killed at an independent rate $\kappa\geq 0$. The case that $\kappa=0$ corresponds to no killing.
Whilst it is normal to characterise L\'evy processes by their characteristic exponent, in the case that the jumps are non-positive one may also comfortably work with the Laplace exponent which satisfies,
\[
\mathbb{E}(e^{u \xi_t}) = e^{\psi(u)t}, \, t\geq 0,
\]
where $u\geq 0$.
Indeed, it is a well established fact that the latter Laplace exponent is  strictly convex on $[0,\infty)$, see for example Bertoin \cite{Bertoin-96}. Moreover, the L\'evy-Khintchine formula necessarily allows us to write $\psi$ in the form
\begin{equation}
\psi(u) =-\kappa+au +\frac{1}{2}\sigma^{2}u^{2}+\int_{(-\infty, 0)}(\mathrm{e}^{u x}-1-u x\mathbf{1}_{(|x|<1)})\Pi ({\rm d}x),
\label{LLK}
\end{equation}
for $u\geq 0$ where $\kappa\geq 0$, $a\in\mathbb{R}$, $\sigma^2\geq 0$ and $\Pi$ is a measure concentrated on $(-\infty,0)$ satisfying $\int_{(-\infty, 0)} (1\wedge x^2) \Pi({\rm d}x)<\infty$.
Note in particular that our definition includes the case that $-\xi$ is a (possibly killed) subordinator. Indeed, when $\Pi$ satisfies $\int_{(-\infty, 0)} (1\wedge |x|)\Pi({\rm d}x)<\infty$ and we choose $\sigma=0$ and $a = -\mathtt{d} +\int_{(-1,0)} x\Pi({\rm d}x)$
we may write for $u\geq 0$,
\[
\psi(u) = -\kappa -{\mathtt d}u - \int_{(0,\infty)} (1 - {\rm e}^{-u x})\nu({\rm d} x),
\]
where $\nu(x,\infty ) = \Pi(-\infty, -x)$. When $\mathtt d\geq 0$, writing $S_t = - \xi_t$ for $t\geq 0$, $\mathtt d$ and  $\nu$ should be thought of as the drift and L\'evy measure of the subordinator $S  = (S_t, t\geq 0)$ respectively. Moreover, writing $\phi(u) = -\psi (u)$ for $u\geq 0$, we may think of $\phi$ as the Laplace exponent of $S$ in the classical sense, namely
\[
 \mathbb{E}(e^{-u S_t}) = e^{-\phi(u)t}, \, t\geq 0.
\]
In general we shall refer to $\Psi_\ell$ as the family of Laplace exponents of (possibly killed) L\'evy processes with no positive jumps which are killed at rate $\kappa\geq 0$ and are well defined on $(\ell,\infty)$ for $\ell \leq 0$.

Our main objective  is to introduce a parametric family of linear transformations which serves as a mapping from the  space of Laplace exponents of L\'evy processes with no positive jumps into itself and therewith explore how a family of existing results for L\'evy processes may be extrapolated further. The paper is structured as follows. In the next section we introduce our three-parameters transformation and derive some basic properties. We also describe its connection with some transformations which have already appeared in the literature. The remaining part of the paper deals with the applications of our transformation to different important issues arising in the framework of L\'evy processes and related processes. More specifically, in the third section we provide some ways for getting new expressions for the so-called scale function of L\'evy processes. Section 4 is devoted to the exponential functional of L\'evy processes and finally in the last section we develop some applications to
the study of positive self-similar Markov processes.

\section{The transformation $\mathcal{T}_{\delta,\beta}$}\label{introsection}

We begin with the definition of our new transformation and consider its properties as a mapping on $\Psi_\ell$.

\begin{definition} Suppose that $\psi\in\Psi_\ell$ where $\ell\leq 0$. Then for $\delta,\beta \geq 0$, with the additional constraint that $\psi'(0+) = \psi(0) = 0$ if $\beta = 0$, let
\begin{eqnarray*}
\mathcal{T}_{\delta,\beta}\psi(u) &:=& \frac{u+\beta-\delta}{u+\beta}
\psi(u+\beta)-\frac{\beta-\delta}{\beta}\psi(\beta), \quad u\geq -\beta.
\end{eqnarray*}
\end{definition}


\noindent Let us make some immediate observations on the above definition. Firstly note that $\mathcal{T}_{\delta,\beta}$ is a linear transform.
 In the special case that $\delta = \beta$ we shall write $\mathcal{T}_{\beta}$ in place of  $\mathcal{T}_{\beta,\beta}$. The transform $\mathcal{T}_\beta$ was considered recently for general spectrally negative L\'evy processes in Kyprianou and Patie \cite{KP} and for subordinators (as a result of a path transformation known as sliced splitting) in Gnedin \cite{Gnedin}.
Next note that, for $\beta, \gamma$ such that $\beta+\gamma\geq 0$,
$$\mathcal{T}_{\gamma}\circ\mathcal{T}_{\beta} = \mathcal{T}_{\gamma+\beta}.$$
Note that one can in principle extend the definition of $\mathcal{T}_{\delta,\beta}$ to L\'evy processes which have jumps on both sides which, for example, have all their exponential moments, by writing such processes as the difference of spectrally negative L\'evy processes and applying the $\mathcal{T}_{\delta,\beta}$ linearly.

In the special case that $\delta = 0$ and $\beta\geq 0$
 we have $\mathcal{E}_{\beta}:= \mathcal{T}_{0,\beta}$ satisfies
 \[
 \mathcal{E}_\beta \psi(u)= \psi(u+\beta)-\psi(\beta), \, u\geq -\beta,
 \]
 where, as usual,  $\psi\in\Psi_\ell$.
 This is the classical Esscher transform for L\'evy processes with no positive jumps expressed in terms of Laplace exponents.
It will be convenient to note for later that if $\Phi(u):=\psi(u)/u$ then we may write
 \[
 \mathcal{T}_{\delta, \beta} \psi (u) = \mathcal{E}_\beta \psi(u) - \delta\mathcal{E}_\beta\Phi(u).
 \]
 In particular we see that when $\beta = 0$, the assumption that $\psi'(0+) = 0$ allows us to talk safely about $\Phi(0+)$.

 \bigskip

One may think of $\mathcal{T}_{\delta, \beta}$ as one of the many possible generalisations of the Esscher transform. For $\beta\geq 0$, the latter is a well-known linear transformation which maps $\Psi_\ell$ into itself and
 has proved to be a very effective tool in analysing many different fluctuation identities for L\'evy processes with no positive jumps. It is natural to ask if $\mathcal{T}_{\delta,\beta}$ is equally useful in this respect. A first step in answering this question is to first prove that $\mathcal{T}_{\delta,\beta}$ also maps $\Psi_\ell$ into itself. This has already been done for the specific family of  transformations $\mathcal{T}_\beta$ in Lemma 2.1 of \cite{KP}.

 \begin{prop} \label{prop:mapping}
 Suppose that $\psi\in\Psi_\ell$ where $\ell\leq 0$.
Fix $\delta, \beta\geq 0$   with the additional constraint that $\psi'(0+)=\psi(0)=0$ if $\beta = 0$.
Then $\mathcal{T}_{\delta,\beta}\psi\in\Psi_{\ell-\beta}\subseteq \Psi_\ell$ and has no killing component.
Moreover, if $\psi$ has Gaussian coefficient $\sigma$ and jump measure $\Pi$ then $\mathcal{T}_{\delta,\beta}\psi$ also has Gaussian coefficient $\sigma$ and its L\'evy  measure is given by
\[
e^{\beta x}\Pi({\rm d}x)+\delta e^{\beta x}\overline{\Pi}(x){\rm d}x + \delta\frac{\kappa}{\beta}e^{\beta x}{\rm d}x\,\, \text{ on }(-\infty,0),
\]
where $\overline{\Pi }(x) = \Pi(-\infty, -x)$ and we understand the final term to be zero whenever $\kappa =0$.
\end{prop}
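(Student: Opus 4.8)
The plan is to verify directly that $\mathcal{T}_{\delta,\beta}\psi$ admits a Lévy–Khintchine representation of the form \eqref{LLK} with the stated Gaussian coefficient, the stated Lévy measure, and no killing term; convexity on $(\ell-\beta,\infty)$ and membership in $\Psi_{\ell-\beta}$ then follow automatically since any function of that form is the Laplace exponent of a (possibly killed) spectrally negative Lévy process, hence strictly convex. The natural decomposition to exploit is the one already recorded in the excerpt, namely $\mathcal{T}_{\delta,\beta}\psi = \mathcal{E}_\beta\psi - \delta\,\mathcal{E}_\beta\Phi$ with $\Phi(u)=\psi(u)/u$. The first summand is the ordinary Esscher transform, whose effect on the Lévy triplet is classical: it leaves $\sigma$ unchanged, kills the killing rate (more precisely replaces $-\kappa$ by $\psi(\beta)$, contributing to the drift, not a killing term), and tilts the Lévy measure $\Pi({\rm d}x)\mapsto e^{\beta x}\Pi({\rm d}x)$. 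So the whole content is to identify the Lévy–Khintchine data of $\mathcal{E}_\beta\Phi$.

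The key step is therefore to compute $\mathcal{E}_\beta\Phi(u) = \psi(u+\beta)/(u+\beta) - \psi(\beta)/\beta$ as a Laplace exponent. I would start from \eqref{LLK} for $\psi(u+\beta)$, divide by $u+\beta$, and handle the three pieces. The drift and Gaussian parts of $\psi$ produce, after division, a constant plus a linear term plus $\tfrac12\sigma^2 u$ — i.e.\ no second-order term, consistent with $\mathcal{E}_\beta\Phi$ carrying Gaussian coefficient $0$; the $-\kappa$ becomes $-\kappa/(u+\beta)$, and the integral term becomes $\tfrac{1}{u+\beta}\int_{(-\infty,0)}(e^{(u+\beta)x}-1-(u+\beta)x\mathbf 1_{|x|<1})\Pi({\rm d}x)$. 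The crucial identity is the elementary Frullani-type representation
\[
\frac{e^{(u+\beta)x}-1}{u+\beta} = \int_{x}^{0} e^{(u+\beta)y}\,{\rm d}y \qquad (x<0),
\]
which lets me write $\tfrac{1}{u+\beta}\int(e^{(u+\beta)x}-1)\Pi({\rm d}x)$ as $-\int_{(-\infty,0)} e^{(u+\beta)y}\,\overline{\Pi}(-y)\,{\rm d}y$ after interchanging the order of integration (Tonelli, using $\int(1\wedge|x|)$-type bounds valid because $\psi$ is finite at $u+\beta>\ell$), i.e.\ a term of the form $\int(e^{(u+\beta)y}-1)\overline{\Pi}(-y)\,{\rm d}y$ plus a linear correction. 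Similarly $-\tfrac{\kappa}{u+\beta} = -\tfrac{\kappa}{\beta} + \tfrac{\kappa}{\beta}\cdot\tfrac{u}{u+\beta} = -\tfrac{\kappa}{\beta} + \tfrac{\kappa}{\beta}\int_{(-\infty,0)} e^{(u+\beta)y}\,{\rm d}y\cdot(\dots)$ — more precisely one writes $\tfrac{u}{u+\beta}= 1 - \tfrac{\beta}{u+\beta}$ and $\tfrac{1}{u+\beta}=-\int_{-\infty}^0 e^{(u+\beta)y}{\rm d}y$ in the sense of a convergent integral for $u+\beta>0$, yielding the absolutely continuous contribution $-\kappa\int_{(-\infty,0)}e^{(u+\beta)y}{\rm d}y = \tfrac{\kappa}{\beta}\big(e^{\beta\cdot 0}-\dots\big)$; bookkeeping then shows this produces exactly the density $\tfrac{\kappa}{\beta}e^{\beta x}$ against ${\rm d}x$ once one re-multiplies by the Esscher factor. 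Collecting: $\mathcal{E}_\beta\Phi$ has Gaussian coefficient $0$, some constant (absorbed into the drift of the final answer, so no killing), and Lévy measure $e^{\beta x}\big(\overline{\Pi}(-x)\,? \big)$ — I must be careful with the sign/argument convention $\overline{\Pi}(x)=\Pi(-\infty,-x)$ so that after substituting $y\mapsto x$ the density reads $e^{\beta x}\overline{\Pi}(x)\,{\rm d}x$ on $(-\infty,0)$, plus $\tfrac{\kappa}{\beta}e^{\beta x}\,{\rm d}x$.

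Finally, I would assemble $\mathcal{T}_{\delta,\beta}\psi = \mathcal{E}_\beta\psi - \delta\,\mathcal{E}_\beta\Phi$: the Gaussian coefficient is $\sigma - \delta\cdot 0 = \sigma$; the Lévy measure is $e^{\beta x}\Pi({\rm d}x) + \delta e^{\beta x}\overline{\Pi}(x)\,{\rm d}x + \delta\tfrac{\kappa}{\beta}e^{\beta x}\,{\rm d}x$; and all constant terms combine into a finite constant which, because $\mathcal{T}_{\delta,\beta}\psi$ is then manifestly of the form \eqref{LLK} with $\kappa=0$ (the constant is the value at a reference point, reabsorbed via the drift $a$), means there is no killing — one can double-check $\mathcal{T}_{\delta,\beta}\psi(0)=0$ directly from the definition, which confirms no killing. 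I also need to check the new Lévy measure integrates $1\wedge x^2$ near $0$ and is finite away from $0$: near $0$ the extra density $\delta e^{\beta x}\overline{\Pi}(x)$ behaves like $\delta\overline{\Pi}(x)$, and $\int_{-1}^{0} x^2\,\overline{\Pi}(x)\,{\rm d}x < \infty$ follows from $\int(1\wedge x^2)\Pi({\rm d}x)<\infty$ by Fubini, while the $\tfrac{\kappa}{\beta}e^{\beta x}$ term is bounded; integrability at $-\infty$ uses that $\psi$ is finite at some point $>\ell$, i.e.\ $\int_{(-\infty,-1)} e^{(\ell+\epsilon)x}\Pi({\rm d}x)<\infty$ forces the requisite decay. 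The main obstacle is precisely this last bit of care — the interchange-of-integration step and the domain-of-finiteness estimate showing $\mathcal{T}_{\delta,\beta}\psi\in\Psi_{\ell-\beta}$ rather than merely on a smaller half-line — together with keeping the $\overline{\Pi}(x)$-vs-$\overline{\Pi}(-y)$ sign conventions straight; the algebra itself is routine.
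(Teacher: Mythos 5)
Your proposal follows essentially the same route as the paper: both use the decomposition $\mathcal{T}_{\delta,\beta}\psi=\mathcal{E}_\beta\psi-\delta\,\mathcal{E}_\beta\Phi$ with $\Phi(u)=\psi(u)/u$, invoke the classical effect of the Esscher transform on the triplet for the first summand, and identify the L\'evy--Khintchine data of $\mathcal{E}_\beta\Phi$ by the same integration-by-parts/Fubini computation (your Frullani-type identity is exactly the paper's ``lengthy but straightforward computation based on integration by parts''), treating the killing term via $-\kappa/(u+\beta)=-\kappa/\beta+(\kappa/\beta)\,u/(u+\beta)$ just as the paper does. The only difference is that you spell out the integrability and domain-of-finiteness checks more explicitly than the paper, which simply asserts them; the argument is the same.
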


\begin{proof}
Recall from earlier that  $ \mathcal{T}_{\delta, \beta} \psi (u) = \mathcal{E}_\beta \psi(u) - \delta\mathcal{E}_\beta\phi(u)$.  Moreover, from its definition, it is clear that argument of $\mathcal{T}_{\delta, \beta}$ may be taken for all $u$ such that $u+\beta \geq \ell$.  It is well understood that $\mathcal{E}_\beta\psi$ is the Laplace exponent of a spectrally negative L\'evy process without killing whose Gaussian coefficient remains unchanged but whose L\'evy measure is transformed from $\Pi({\rm d}x)$ to $e^{\beta x}\Pi({\rm d}x)$. See for example Chapter 8 of \cite{Kyprianou-06}.
The proof thus boils down to understanding the contribution from $-\delta\mathcal{E}_\beta\phi(u)$.

A lengthy but straightforward computation based on integration by parts shows that
\[
\Phi(u) = -\frac{\kappa}{u} + (a - \overline\Pi(-1)) + \frac{1}{2}\sigma^2 u + \int_{-\infty}^0 (\mathbf{1}_{(|x|<1)} - e^{xu})\overline{\Pi} (x){\rm d}x.\\
\]
From this it follows that
\begin{eqnarray*}
-\delta\mathcal{E}_\beta\phi(u) &=& -\delta\frac{\kappa}{\beta} \frac{u}{u+\beta} -\delta (a - \overline{\Pi}(1))u -\delta\int_{-\infty}^0 (1 - e^{x u})e^{\beta x}\overline{\Pi}(x){\rm d}x\\
&=& -\delta (a - \overline{\Pi}(1))u  -\delta\int_{-\infty}^0 (1 - e^{x u})e^{\beta x}\overline{\Pi}(x){\rm d}x
-\delta\frac{\kappa}{\beta}\int_0^\infty (1 - e^{-ux})e^{-\beta x}{\rm d}x.
\end{eqnarray*}
Here we understand the final integral above to be zero if $\kappa = 0$.
In that case we see that $-\delta \mathcal{E}_\beta\psi(u)$ is the Laplace exponent of a spectrally negative L\'evy process which has no Gaussian component and a jump component which is that of a negative subordinator with jump measure given by $\delta e^{\beta x}\overline{\Pi}(x){\rm d}x + \beta^{-1}\kappa\delta e^{\beta x}{\rm d}x$ on $(-\infty,0)$.
\end{proof}

\begin{remark}\rm
As $\mathcal{T}_{\delta, \beta}$ maps $\Psi_{\ell}$ in to itself, it is possible to iterate the application of this transformation to produce yet further examples of spectrally negative L\'evy process exponents.
Consider for example the transformation $\mathcal{T}^\gamma_{\delta, \beta} = \mathcal{T}_\gamma\circ\mathcal{T}_{\delta, \beta}$ where $\delta,\beta\geq 0$, $\gamma> 0$. It is straightforward to check for this particular example that
 \begin{eqnarray*}
\mathcal{T}^{\gamma}_{\delta,\beta}\psi(u) &=&\frac{u}{u+\gamma}\left(\frac{u+\gamma+\beta-\delta}{u+\gamma+\beta}
\psi(u+\gamma+\beta)-\frac{\beta-\delta}{\beta}\psi(\beta)\right).
\end{eqnarray*}
\end{remark}

\bigskip

Whilst it is now clear that the mappings $\mathcal{T}^\gamma_{\delta,\beta}$ may serve as a way of generating new examples of L\'evy processes with no positive jumps from existing ones, our interest is largely motivated by how the aforesaid transformation interacts with certain path transformations and fluctuation identities associated to L\'evy processes. Indeed, as alluded to above, starting with Urbanik \cite{Urbanik}, the formalisation of these transformations is motivated by the appearance of particular examples in a number of such contexts. On account of the diversity of these examples, it is worth recalling them here briefly for interest.

\bigskip

\noindent {\bf Applications to pssMp:} In Kyprianou and Patie \cite{KP} the transformation $\mathcal{T}_\beta$ turns out to give a natural encoding for the Ciesielski-Taylor identity as seen from the general point of view of spectrally negative positive self similar Markov processes (pssMp). Recall that Lamperti \cite{Lamperti-72} showed that for any $x>0$, there exists a one to one mapping between $\P_{\log x}$, the law of a L\'evy process killed at rate $\kappa\geq0$, starting from $\log x$, and the law
$\Q_{x}$ of a $\alpha$-self-similar positive Markov process $X$ on
$(0,\infty)$ where $\alpha\in\mathbb{R}$, i.e.~a Feller process which satisfies for any $c>0$,
\begin{equation} \label{eq:self}
\left((X_{t}, t\geq0), \Q_{cx}\right)
\stackrel{(d)}{=}\left((cX_{c^{-\alpha}t}, t\geq0), \Q_{x}\right).
\end{equation}
More specifically, Lamperti proved that $X$ can be constructed from
$\xi$ as follows
\begin{equation} \label{eq:lamp_transf}
\log\left(X_t\right) =  \xi_{\iex_t}, \: t\geq 0,
\end{equation}
 where
\begin{equation*}
 \iex_t = \inf
\{ s \geq 0; \: \Ex_s := \int_0^s e^{\alpha \xi_u} \: du
> t \}.
\end{equation*}  

In the case that $\xi$ is spectrally negative with Laplace exponent $\psi$, we shall denote the law of $X$ by $\mathbb{Q}_x^\psi$. Under reasonably broad conditions it is well understood how to extend the definition of $X$ as  Markov process to include $0$ into its state space; that is to say, how to give meaning to the law $\mathbb{Q}_0$. This is done either by constructing an entrance  law for $X$ at $0$,  or by constructing a recurrent extension from $0$; see for example \cite{Bertoin-Yor-02-b, Caballero-Chaumont, Rivero-05, Fitz}. Within the class of pssMps which are spectrally negative, the construction of a unique entrance law is always possible whenever $\psi'(0+)\geq 0$ and  $\kappa =0$. The existence of a unique recurrent extension when $\psi'(0+)<0$ or $\kappa>0$ requires that $\theta: = \sup\{\lambda \geq 0 : \psi(\lambda)  =0\}\in(0,\alpha)$.

\medskip

{\it Ciesielski-Taylor identity:} Within this class of spectrally negative pssMps and in the spirit of the original Ciesielski-Taylor identity for Bessel processes (which themselves are included in the aforementioned category), \cite{KP} showed that for each $a>0$,
\[
(T_a, \mathbb{Q}^\psi_0) \stackrel{(d)}{=} \left(\int_0^\infty \mathbf{1}_{\{X_s\leq a\}}ds, \mathbb{Q}_0^{\mathcal{T}_\alpha\psi}\right),
\]
where $T_a = \inf\{t>0 : X_t =a\}$.

\medskip

{\it Continuous state branching processes with immigration:} Here, we take  $X$ to be  a self-similar continuous state branching process with immigration. We refer to Patie \cite[Section 4]{Patie-06-poch} for more detailed information on this family of Markov processes. In particular, it is shown   that $X$ is self-similar of index $0<
\alpha\leq1$  and its branching mechanism is $\varphi(u)=-\frac{c}{\alpha}u^{\alpha+1}, c>0,$ while its immigration mechanism is given by
$\chi(u)=c\delta(\frac{\alpha+1}{\alpha})u^{\alpha},\: \delta>0$. The L\'evy process $\xi$ associated to $X$ via the Lamperti mapping \eqref{eq:lamp_transf} is characterized by the Laplace exponent $\psi^{(\alpha,\delta)}$ of $-\xi$ which admits the following form
\begin{eqnarray*}
\psi^{(\alpha,\delta)}(u)&=& \mathcal{T}_{(\alpha+1)\delta,\alpha}\psi(u)
\end{eqnarray*}
where $\psi(u)=c\frac{\Gamma(u+1)}{\Gamma(u-\alpha)}$ is the Laplace exponent of the spectrally negative L\'evy process associated, via the Lamperti mapping, to the spectrally negative $(\alpha+1)-$stable L\'evy process killed upon entering into the negative real line, see \cite[Proposition 4.11]{Patie-06-poch}.
\bigskip

\noindent{\bf Sliced splitting of subordinators:} In combination with the study of certain combinatorial structures, Gnedin \cite{Gnedin} introduced the following method of sliced splitting the path of subordinators which corresponds to the application of a special case of the transformation we introduce here.

For a subordinator (no killing) with Laplace exponent $\phi$, run its path until it first crosses a level $\mathbf{e}$, which is independent and exponentially distributed with parameter $\beta$. Remove the path of the subordinator which lies above this level and issue an independent copy of the original subordinator from the crossing time at the spatial position $\mathbf{e}$. Repeat this procedure independently within the space-time framework of the latter subordinator and so on.

 To describe  more precisely the resulting process, suppose that $\{\mathbf{e}_i: i\geq 1\}$ are the inter-arrival times of arrival of a Poisson process with rate $\beta$ and suppose that $\{S^{(i)}:i\geq 1\}$ are independent copies of the subordinator whose Laplace exponent is $\phi$. Let $\tau_0 =0$ and for $i\geq 1$, $\tau_i = \inf\{t>0 : S^{(i)}_t > \mathbf{e}_i \}$. The path which was earlier described in a heuristic way is defined iteratively by $S_0 = 0$, and, for $i\geq 1$,
\[
S_t = S_{T_{i-1}}+ \min\{S^{(i)}_{t-T_{i-1}}, \mathbf{e}_i  \}\text{ when  }t\in(T_{i-1}, T_i],
\]
where $T_0 =0$ and, for $i\geq 1$, $T_i = \tau_1 + \cdots + \tau_i$.
Gnedin \cite{Gnedin} shows that the resulting process $S = (S_t , t\geq 0)$ is again a subordinator and moreover its Laplace exponent, $\phi_\beta$, satisfies
\[
\phi_\beta(u) =\mathcal{T}_\beta \phi(u)
\]
for $u\geq0$.


\section{Scale functions for spectrally negative L\'evy processes}

Scale functions have occupied a central role in the theory of spectrally negative L\'evy processes over the last ten years. They appear naturally in virtually all fluctuation identities of the latter class and consequently have also been instrumental in solving a number of problems from within classical applied probability. See Kyprianou \cite{Kyprianou-06} for an account of some of these applications. Despite the fundamental nature of scale functions in these settings, until recently very few explicit examples of scale functions have been found. However in the recent work of Hubalek and Kyprianou \cite{Hubalek-Kyprianou-10}, Chaumont et al. \cite{Chaumont-Kyprianou-Pardo-09}, Patie \cite{Patie-06-poch}, Kyprianou and Rivero \cite{Kyprianou-Rivero-08}, many new examples as well as general methods for constructing explicit examples have been uncovered. We add to this list of contemporary literature by showing that the
 some of transformations introduced in this paper can be used to construct  new families of scale functions from existing examples.

Henceforth we shall assume that the underlying L\'evy process, $\xi$, is spectrally negative, but does not have monotone paths. Moreover, we allow, as above, the case of independent killing at rate $\kappa\geq 0$.  For a given spectrally negative L\'evy process with Laplace exponent $\psi$, its scale function $W_{\psi}:[0,\infty)\mapsto[0,\infty)$ is the unique continuous positive increasing
function
characterized by its Laplace transform as follows.  For any $\kappa\geq0$ and  $u>\theta:=\sup\{\lambda \geq 0 : \psi(\lambda ) = 0\}$,
\begin{eqnarray*}
 \int_0^{\infty}e^{-ux}W_{\psi}(x)dx = \frac{1}{\psi(u)}.
\end{eqnarray*}
In the case that $\kappa>0$, $W_\psi$ is also known as the $\kappa$-scale function.

Below we show how our new transformation generates new examples of scale functions from old ones; first in the form of a theorem and then with some examples.
\begin{thm}\label{scalefunctions}
Let $x,\beta\geq0$. Then,
\begin{eqnarray}
W_{\mathcal{T}_{\beta}\psi}(x)&=& e^{-\beta x}W_{\psi}(x)
+\beta \int_0^{x}e^{-\beta y}W_{\psi}(y)dy.
\label{scale1}
\end{eqnarray}
Moreover, if $\psi'(0+)\leq0$, then  for any $x,\beta,\delta \geq 0$, we have
\begin{eqnarray*}
W_{\mathcal{T}_{\delta,\theta}\psi}(x)&=&e^{-\theta
x}\left(W_{\psi}(x) +\delta e^{\delta
x}\int_0^{x}e^{-\delta
y}W_{\psi}(y)dy\right)
\end{eqnarray*}
\end{thm}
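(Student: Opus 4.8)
The plan is to verify both identities directly against the defining Laplace-transform characterisation of scale functions: for each transformed exponent I would compute the Laplace transform of the claimed right-hand side and check that it coincides with $1/(\mathcal{T}_{\beta}\psi(u))$, resp.\ $1/(\mathcal{T}_{\delta,\theta}\psi(u))$, on the half-line where the latter characterises the corresponding scale function; since both sides are continuous functions of at most exponential growth, uniqueness of Laplace transforms then closes the argument. The only tools needed are the two operational rules — multiplying by $e^{-c\,\cdot}$ shifts the argument of the transform by $c$, integrating from $0$ divides the transform by $u$ — together with $\int_0^\infty e^{-ux}W_\psi(x)\,dx = 1/\psi(u)$ for $u>\theta$. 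Before starting I would record, via Proposition~\ref{prop:mapping}, that $\mathcal{T}_\beta\psi$ and $\mathcal{T}_{\delta,\theta}\psi$ again lie in $\Psi_\ell$ with no killing component, and that a short verification shows they remain non-monotone (the Gaussian coefficient is preserved and the quantity $\lim_{u\to\infty}\psi(u)/u$, which is positive by non-monotonicity and governs the linear growth, is left unchanged by $\mathcal{T}_{\delta,\beta}$), so their scale functions are well defined. In the second statement the hypothesis $\psi'(0+)\le 0$ is used precisely to guarantee $\theta\in[0,\infty)$ and that, when $\theta=0$, one has $\psi'(0+)=\psi(0)=0$, which is exactly the condition the Definition imposes on $\mathcal{T}_{\delta,0}$.

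For \eqref{scale1}, setting $\delta=\beta$ in the Definition gives $\mathcal{T}_\beta\psi(u)=\tfrac{u}{u+\beta}\psi(u+\beta)$, so $\sup\{\lambda\ge 0:\mathcal{T}_\beta\psi(\lambda)=0\}=\max\{0,\theta-\beta\}$ and, on that half-line, $1/\mathcal{T}_\beta\psi(u)=\tfrac{1}{\psi(u+\beta)}+\tfrac{\beta}{u}\cdot\tfrac{1}{\psi(u+\beta)}$. The shift rule turns $e^{-\beta x}W_\psi(x)$ into the first summand, and the integration rule turns $\beta\int_0^x e^{-\beta y}W_\psi(y)\,dy$ into the second, so the transform of the right-hand side of \eqref{scale1} equals $1/\mathcal{T}_\beta\psi(u)$ on $(\max\{0,\theta-\beta\},\infty)$ — precisely the range defining $W_{\mathcal{T}_\beta\psi}$. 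For the second identity the crucial simplification is that $\psi(\theta)=0$ by the very definition of $\theta$, so the constant term drops out and $\mathcal{T}_{\delta,\theta}\psi(u)=\tfrac{u+\theta-\delta}{u+\theta}\psi(u+\theta)$, whence $1/\mathcal{T}_{\delta,\theta}\psi(u)=\tfrac{1}{\psi(u+\theta)}+\tfrac{\delta}{(u+\theta-\delta)\psi(u+\theta)}$. Rewriting the claimed expression as $e^{-\theta x}W_\psi(x)+\delta e^{-(\theta-\delta)x}\int_0^x e^{-\delta y}W_\psi(y)\,dy$, the first term transforms to $\tfrac{1}{\psi(u+\theta)}$ (shift by $\theta$), and the second, by integrating and then shifting by $\theta-\delta$, to $\tfrac{\delta}{(u+\theta-\delta)\psi(u+\theta)}$; the sum is $1/\mathcal{T}_{\delta,\theta}\psi(u)$, valid on $(\max\{0,\delta-\theta\},\infty)$, which again matches $\sup\{\lambda\ge 0:\mathcal{T}_{\delta,\theta}\psi(\lambda)=0\}=\max\{0,\delta-\theta\}$, so no constraint linking $\delta$ and $\theta$ is needed.

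I do not anticipate a deep obstacle: the body of the proof is bookkeeping with Laplace transforms. What genuinely needs care is (i) checking that, for each transformed exponent, the half-line on which the transform identity holds really equals the half-line $\{u>\sup\{\lambda:\mathcal{T}\psi(\lambda)=0\}\}$ used to characterise its scale function, so that the identities cover the whole relevant range and uniqueness applies; and (ii) tracking, in the second part, the hypothesis $\psi'(0+)\le 0$, which is exactly what keeps $\mathcal{T}_{\delta,\theta}\psi$ within the scope of the Definition and of Proposition~\ref{prop:mapping}. Once those are in place, uniqueness of Laplace transforms delivers both displays.
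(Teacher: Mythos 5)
Your proposal is correct and follows essentially the same route as the paper: identify $1/\mathcal{T}_{\beta}\psi(u)$ (resp.\ $1/\mathcal{T}_{\delta,\theta}\psi(u)$, after using $\psi(\theta)=0$) via the partial-fraction split $\tfrac{u+\beta}{u\,\psi(u+\beta)}=\tfrac{1}{\psi(u+\beta)}+\tfrac{\beta}{u\,\psi(u+\beta)}$, match it term by term with the Laplace transform of the claimed right-hand side, and conclude by uniqueness of Laplace transforms. Your explicit bookkeeping of the half-lines of validity and of the role of the hypothesis $\psi'(0+)\le 0$ is slightly more careful than the paper's, but the argument is the same.
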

\begin{proof}
The first assertion is
proved by observing that
\begin{eqnarray}
 \int_0^{\infty}e^{-ux}W_{\mathcal{T}_{\beta}\psi}(x)dx &=& \frac{u+\beta}{u\psi(u+\beta)} \nonumber \\
&=& \frac{1}{\psi(u+\beta)} +\frac{\beta}{u\psi(u+\beta) },
\label{invertthis}
\end{eqnarray}
which agrees with the Laplace transform of the right hand side of (\ref{scale1}) for which an integration by parts is necessary. As scale functions are right continuous, the result follows by the uniqueness of Laplace transforms.

For the second claim,  first note that $\mathcal{T}_{\delta, \theta}\psi = (u+\theta-\delta)\psi(u+\theta)/(u+\theta)$. A straightforward calculation shows that
 for all $u+\delta>\theta$, we have
\begin{eqnarray} \label{eq:sc}
 \int_0^{\infty}e^{-ux}e^{(\theta  -\delta)x}W_{\mathcal{T}_{\delta,\theta}\psi}(x)dx &=& \frac{u+\delta}{u\psi(u+\delta)}.
 \end{eqnarray}
The result now follows from the first part of the theorem.
\end{proof}

When $\psi(0+)>0$ and $\psi(0)=0$, the first  identity in the above theorem contains part of the conclusion in Lemma 2 of  Kyprianou and Rivero \cite{Kyprianou-Rivero-08}.  However, unlike the aforementioned result, there are no further restrictions on the underlying L\'evy processes and the expression on the right hand side is written directly in terms of the scale function $W_\psi$ as opposed to elements related to the L\'evy triple of the underlying descending ladder height process of $\xi$.

Note also that in the case that $\psi$ is the Laplace exponent of an unbounded variation spectrally negative L\'evy process, it is known that  scale functions are almost everywhere differentiable and moreover that they are equal to zero at zero; cf. Chapter 8 of \cite{Kyprianou-06}. One may thus integrate by parts the  expressions in the theorem above and obtain the following slightly more compact forms,
\begin{eqnarray*}
W_{\mathcal{T}_{\beta}\psi}(x)&=& \int_0^{x}e^{-\beta y}W'_{\psi}(y)dy\, \text{ and }\,
W_{\mathcal{T}_{\delta,\theta}\psi}(x)=  e^{-(\theta-\delta)
x}\int_0^{x}e^{-\delta y}W'_{\psi}(y)dy.
\end{eqnarray*}

We conclude this section by giving some examples.

\begin{exa}[(Tempered) Stable Processes]\rm \label{ex:sp}
Let $\psi_{\kappa,c}(u)=(u+c)^{\alpha} - c^\alpha- \kappa$ where $1<\alpha<2$ and $\kappa,c\geq 0$.  This is the Laplace exponent of an unbounded variation tempered stable spectrally negative L\'evy process $\xi$ killed at an independent and exponentially distributed time with rate $\kappa$. In the case that $c=0$, the underlying L\'evy process is just  a regular
spectrally negative $\alpha$-stable L\'evy process. In that case it is known that
\[
\int_0^\infty e^{-ux} x^{\alpha-1}\mathcal{E}_{\alpha,\alpha}(\kappa x^{\alpha})dx = \frac{1}{u^\alpha-\kappa}
\]
and hence the scale function is given by
\begin{eqnarray*}
W_{\psi_{\kappa,0}}(x)&=& x^{\alpha-1}\mathcal{E}_{\alpha,\alpha}(\kappa x^{\alpha}),\quad x\geq 0,
\end{eqnarray*}
where $\mathcal{E}_{\alpha,\beta}(x)=\sum_{n=0}^{\infty}\frac{x^n}{\Gamma(\alpha n+\beta)}$ stands for the generalized Mittag-Leffler function. (Note in particular that when $\kappa=0$ the expression for the scale function simplifies to $\Gamma(\alpha)^{-1} x^{\alpha-1}$).
Since
\[
\int_0^\infty e^{- u x} e^{-cx}W_{\psi_{\kappa+ c^\alpha,0}}(x) dx = \frac{1}{(u+c)^{\alpha} - c^\alpha- \kappa}
\]
it follows that
\[
W_{\psi_{\kappa,c}}(x)= e^{-cx}W_{\psi_{\kappa+ c^\alpha,0}}(x) = e^{-cx}x^{\alpha-1}\mathcal{E}_{\alpha,\alpha}((\kappa+ c^\alpha)x^{\alpha})
\]
Appealing to the first part of Theorem \ref{scalefunctions} we now know that for $\beta \geq 0,$
\[
 W_{\mathcal{T}_{\beta}\psi_{\kappa,c}}(x) =
e^{-(\beta +c) x }x^{\alpha-1}\mathcal{E}_{\alpha,\alpha}((\kappa+ c^\alpha)x^{\alpha})
+ \beta \int_0^x e^{-(\beta +c) y }y^{\alpha-1}\mathcal{E}_{\alpha,\alpha}((\kappa+ c^\alpha)y^{\alpha})dy.
\]

Note that $\psi_{\kappa,c}'(0+) = \alpha c^{\alpha-1}$ which is zero if and only if $c=0$. We may use the second and third part of Theorem \ref{scalefunctions} in this case. Hence, for any $\delta>0$, the scale function of the spectrally negative L\'evy process with Laplace exponent
$\mathcal{T}_{\delta,0}\psi_{0,0}$ is
\begin{eqnarray*}
W_{\mathcal{T}_{\delta,0}\psi_{0,0}}(x)&=& \frac{1}{\Gamma(\alpha-1)} e^{\delta
x}\int_0^xe^{-\delta y}y^{\alpha-2} dy\\
&=&\frac{\delta^{\alpha-1}}{\Gamma(\alpha-1)} e^{\delta
x}\Gamma(\alpha-1,\delta x)
\end{eqnarray*}
where we have used the recurrence relation for the Gamma function and $\Gamma(a,b)$ stands for the incomplete Gamma function of parameters $a,b>0$. Moreover, we have, for any $\beta >0$,
\begin{eqnarray*}
W_{\mathcal{T}^{\beta}_{\delta,0}\psi_{0,0}}(x)&=& \frac{1}{\Gamma(\alpha-1)} \left( \frac{\beta^{\alpha} }{\beta-\delta}\Gamma(\alpha-1,\beta x)- e^{(\beta-\delta)
x}\frac{\delta^{\alpha} }{\beta-\delta}\Gamma(\alpha-1,\delta x)\right).
\end{eqnarray*}
Finally, the scale function of the spectrally negative L\'evy process with Laplace exponent
$\mathcal{T}^{\beta}_{\delta,0}\psi_{\kappa,0}$ is given by
\begin{eqnarray*}
W_{\mathcal{T}_{\delta,0}\psi_{\kappa,0}}(x)&=& (x/\delta)^{\alpha-1}\mathcal{E}_{\alpha,\alpha-1}\left(x;\frac{\kappa}{\delta}\right)\\
W_{\mathcal{T}^{\beta}_{\delta,0}\psi_{\kappa,0}}(x)&=& \frac{\beta}{\beta-\delta}(x/\beta)^{\alpha-1}\mathcal{E}_{\alpha,\alpha-1}\left(x;\frac{\kappa}{\beta}\right) - \frac{\delta}{\beta-\delta}e^{-(\beta-\delta)
x}(x/\delta)^{\alpha-1}\mathcal{E}_{\alpha,\alpha-1}\left(x;\frac{\kappa}{\delta}\right).
\end{eqnarray*}
where we have used the notation
\begin{eqnarray*}
\mathcal{E}_{\alpha,\beta}\left(x;\kappa\right)=\sum_{n=0}^{\infty}\frac{\Gamma(x;\alpha n+\beta) \kappa^n}{\Gamma(\alpha n+\beta)}.
\end{eqnarray*}
\end{exa}

\begin{exa}[$\mathcal{T}$-stable process]\rm
Let us consider, for $1<\alpha<2$, $ \psi(u) =
(u-1)_{\alpha}$, $u \geq 0$, where $(u)_\alpha = \frac{\Gamma(u+\alpha)}{\Gamma(u)}$ stands for the Pochhammer symbol, and note that $\psi(1)=0$. It was shown in \cite{Chaumont-Kyprianou-Pardo-09, Patie-06-poch} that its scale function  is given
by
\begin{equation*}
\mathcal{W}_{\psi}(x)=\frac{1}{ \Gamma(\alpha)}e^{-
x}(1-e^{-x})^{\alpha-1},\quad x\geq0.
\end{equation*}
Hence, the scale function of the spectrally negative L\'evy process with Laplace exponent
$\mathcal{T}_{\delta,1}\psi$ is
\begin{eqnarray*}
W_{\mathcal{T}_{\delta,1}\psi}(x)= \frac{e^{(\delta-1)x}}{\Gamma(\alpha)}
\int_0^x e^{-(\alpha+\delta) y}(e^{y}-1)^{\alpha-2}(\alpha-e^y)dy
\end{eqnarray*}
and
\begin{eqnarray*}
W_{\mathcal{T}^{\beta}_{\delta,1}\psi}(x)= &=& \frac{\beta}{\beta+1-\delta}\int_0^{x}e^{-(\beta+1+\alpha)
y}(e^{y}-1)^{\alpha-2}(\alpha-e^y)dy \\&+& \frac{1-\delta}{\beta+1-\delta}e^{-(\beta+1-\delta)
x}\int_0^{x}e^{-(\delta+\alpha)
y}(e^{y}-1)^{\alpha-2}(\alpha-e^y)dy.
\end{eqnarray*}

\end{exa}


\section{Exponential functional and length-biased distribution}
In this part, we aim to study the effect of the transformation to the law of the exponential functional of some L\'evy processes, namely for subordinators and spectrally negative L\'evy processes. We recall that this random variable is defined by
\begin{eqnarray*}
I_{\psi} =  \int_0^{\infty}e^{-\xi_s}ds.
\end{eqnarray*}
Note that $ \lim_{t\rightarrow \infty} \xi_t =+\infty\: a.s.\Leftrightarrow I_{\psi}<\infty$ a.s. which is equivalent, in the spectrally negative case, to $\E[\xi_1]=\psi'(0^+)>0$. We refer to the survey paper of Bertoin and Yor \cite{Bertoin-Yor-05} for further discussion on  this random variable.  We also mention that Patie  in \cite{Patie-06c}, \cite{Patie-09-cras} and \cite{Patie-abs-08}, provides some explicit characterizations of its law in the case $\xi$ is a spectrally positive L\'evy process. In the sequel, we denote  by $f_{\psi}(dx)$  the law of   $I_{\psi}$.
\subsection{The case of subordinators}
Let us first assume that $\xi$ is a subordinator, that is a non-negative valued L\'evy process. We recall  that, in this case, the Laplace exponent $\phi$  of $\xi$ is given by $\phi(u)=-\psi(u)$, $u\geq0$, where in particular we allow for the case of killing ($\kappa\geq0$). In that case we write $I_\phi$ in place of $I_\psi$ and if $\phi(0)=\kappa$, we have that
\[
I_{\phi} =  \int_0^{{\bf{e}}_\kappa}e^{-\xi_s}ds
\]
where  ${\bf{e}}_\kappa$ stands, throughout, for an exponential random variable of mean $\kappa^{-1}>0$,  independent of $\xi$ (we have ${\bf{e}}_0=\infty$).
Before stating our result, we  recall that Carmona et al.~\cite{Carmona-Petit-Yor-98} determine the law of $I_{\phi}$ through its positive entire moments as follows
\begin{eqnarray} \label{eq:mom_sub}
\E[I_{\phi}^n] &=& \frac{n!}{\prod_{k=1}^n\phi(k)},\: n=0,1\ldots.
\end{eqnarray}
\begin{thm} \label{thm:exp_sub}
For any $\kappa,\beta\geq 0$,  the following identity
\begin{eqnarray}
f_{\mathcal{T}_{\beta}\phi}(dx) &=& \frac{x^{\beta}f_{\phi}(dx)}{\E[I_{\phi}^\beta]},\quad x>0,
\end{eqnarray}
holds. 

\end{thm}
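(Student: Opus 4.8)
The plan is to prove the identity by comparing the entire moments of the two sides, using the Carmona--Petit--Yor formula \eqref{eq:mom_sub} and the fact that a law supported on $(0,\infty)$ with sufficiently controlled moment growth is determined by its positive entire moments. First I would record what $\mathcal{T}_\beta\phi$ is as the Laplace exponent of a subordinator: since $\mathcal{T}_\beta = \mathcal{T}_{\beta,\beta}$, we have $\mathcal{T}_\beta\phi(u) = \frac{u}{u+\beta}\phi(u+\beta)$ for $u\geq 0$ (writing things in terms of $\phi=-\psi$, and noting \refprop{prop:mapping} guarantees $\mathcal{T}_\beta\phi$ is again the Laplace exponent of a subordinator, with no killing). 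Hence for each integer $n\geq 1$,
\[
\mathcal{T}_\beta\phi(n) = \frac{n}{n+\beta}\,\phi(n+\beta).
\]

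Next I would compute the $n$-th moment of the right-hand side. Writing $c_\beta = \E[I_\phi^\beta]$, which is finite and positive (this needs a short justification: $\phi$ is eventually comparable to a positive constant or grows, so $\E[I_\phi^p]<\infty$ for all $p\geq0$ when $I_\phi<\infty$ a.s., and one should note the theorem implicitly presumes $I_\phi<\infty$, i.e. either $\kappa>0$ or the subordinator is not identically zero), we get
\[
\int_0^\infty x^n\,\frac{x^\beta f_\phi(dx)}{c_\beta} = \frac{\E[I_\phi^{n+\beta}]}{\E[I_\phi^\beta]}.
\]
Now I would invoke the Carmona--Petit--Yor description of $I_\phi$ not just through integer moments but through its Mellin transform: it is a standard extension (again from \cite{Carmona-Petit-Yor-98}) that $M_\phi(s):=\E[I_\phi^{s-1}]$ satisfies the functional equation $M_\phi(s+1) = \frac{s}{\phi(s)}M_\phi(s)$ for $\mathrm{Re}(s)>0$, which for integer $s$ reduces to \eqref{eq:mom_sub}. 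Using this with $s = n+\beta$ repeatedly, or more cleanly comparing $\E[I_\phi^{n+\beta}]/\E[I_\phi^\beta]$ telescoped against the functional equation, yields
\[
\frac{\E[I_\phi^{n+\beta}]}{\E[I_\phi^\beta]} = \prod_{k=1}^n \frac{n+\beta-k+1+\cdots}{\cdots},
\]
and here the bookkeeping must be done carefully: the ratio equals $\prod_{j=0}^{n-1}\frac{\beta+j+1}{\phi(\beta+j+1)}$ shifted appropriately so that it matches $\frac{n!}{\prod_{k=1}^n \mathcal{T}_\beta\phi(k)} = \frac{n!}{\prod_{k=1}^n \frac{k}{k+\beta}\phi(k+\beta)} = \prod_{k=1}^n \frac{k+\beta}{\phi(k+\beta)}$. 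So the key algebraic check is that the telescoped Mellin ratio $\E[I_\phi^{n+\beta}]/\E[I_\phi^\beta]$ equals $\prod_{k=1}^n \frac{k+\beta}{\phi(k+\beta)}$, which it does directly from the functional equation applied at the points $\beta+1,\beta+2,\dots,\beta+n$.

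Having matched all entire moments of $f_{\mathcal{T}_\beta\phi}$ with those of the candidate measure $x^\beta f_\phi(dx)/c_\beta$, I would close the argument by a moment-determinacy statement: both measures are probability measures on $(0,\infty)$ and the moments grow slowly enough (indeed $\E[I_\phi^n]$ grows at most like $n!/\prod\phi(k)$, and since $\phi(k)\to\infty$ or is bounded below, one checks Carleman's condition or simply that $I_{\mathcal{T}_\beta\phi}$ has some finite exponential moment / the Mellin transform is analytic in a strip, as is known for exponential functionals of subordinators) to be uniquely determined by their moment sequence. Therefore the two measures coincide, which is the claim. I expect the main obstacle to be the moment-determinacy step phrased cleanly — one must make sure the hypotheses guarantee $I_\phi$ (equivalently $I_{\mathcal{T}_\beta\phi}$) is moment-determinate; the cleanest route is to cite the known fact that the law of the exponential functional of a subordinator is determined by its entire moments (this is precisely how Carmona--Petit--Yor characterise it), so that matching \eqref{eq:mom_sub} for $\mathcal{T}_\beta\phi$ suffices. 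The secondary nuisance is the purely computational identity $\E[I_\phi^{n+\beta}]/\E[I_\phi^\beta] = \prod_{k=1}^n (k+\beta)/\phi(k+\beta)$, which follows from the Mellin functional equation but should be stated with the correct index range.
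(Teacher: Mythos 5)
Your proposal is correct and follows essentially the same route as the paper: both arguments rest on the Carmona--Petit--Yor recursion $\E[I_\phi^{s}]=\frac{s}{\phi(s)}\E[I_\phi^{s-1}]$, the observation that $s\mapsto\E[I_\phi^{s+\beta}]/\E[I_\phi^{\beta}]$ and $s\mapsto\E[I_{\mathcal{T}_\beta\phi}^{s}]$ satisfy the same functional equation (equivalently, your explicit telescoping giving $\prod_{k=1}^n(k+\beta)/\phi(k+\beta)$ for both), and moment determinacy of the law of the exponential functional to conclude. Your added care about the finiteness of $\E[I_\phi^\beta]$ and the determinacy hypothesis is consistent with, and slightly more explicit than, what the paper writes.
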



\begin{proof}
Carmona et al.~\cite{Carmona-Petit-Yor-98}, see also Maulik and Zwart \cite[Lemma 2.1]{Maulik-Zwart-06}, determine the law of
$I_{\phi}$ by computing its positive entire moments which they derive from the following recursive equation, for any $s,\beta>0$ and $\kappa\geq0$,
\begin{eqnarray} \label{eq:rec_sub}
\E[I_{\mathcal{T}_{\beta}\phi}^s] &=& \frac{\mathcal{T}_{\beta}\phi(s)}{s}\E[I_{\mathcal{T}_{\beta}\phi}^{s-1}]
\\
&=&\frac{\phi(s+\beta)}{s+\beta}\E[I_{\mathcal{T}_{\beta}\phi}^{s-1}] \nonumber.
\end{eqnarray}
On the other hand, we also have, for any $s,\beta>0$,
\begin{eqnarray*}
\E[I_{\phi}^{s+\beta}]
&=&\frac{\phi(s+\beta)}{s+\beta}\E[I_{\phi}^{s-1+\beta}].
\end{eqnarray*}
 We get the first assertion by uniqueness of the solution of the recursive equation and the fact that the law of $I_{\phi}$
  is moment determinate.
\end{proof}

Before providing some new examples, we note  from Theorem \ref{thm:exp_sub} that if $I_{\phi}\stackrel{(d)}{=}AB$ for some independent random variables $A,B$ then the positive entire moments of $I_{\mathcal{T}_{\beta}\phi},\: \beta>0$ admit the following expression
\begin{eqnarray} \label{eq:mom_fac}
\E[I_{\mathcal{T}_{\beta}\phi}^n] &=& \frac{\E[A^{n+\beta}]}{\E[A^{\beta}]}\frac{\E[B^{n+\beta}]}{\E[B^{\beta}]},\: n=0,1\ldots.
\end{eqnarray}

\begin{exa}[Poisson process]\rm \label{ex:pois}
Let $\xi$ be a  Poisson process with mean $c=-\log(q)>0$ with $0<q<1$, i.e.~$\phi(u)  =  -\log(q)(1-e^{-u}), u \geq 0$. Biane et al.~\cite{Bertoin-Biane-Yor-04} computed the law of $I_{\phi}$ by means of $q-$calculus. More precisely, they show that its law is self-decomposable and is  given by
\begin{eqnarray*}
f_{\phi}(dy)= \sum_{n=0}^{\infty} (-1)^n e^{-y/q^n}\frac{q^{\frac{n(n-1)}{2}}}{(q;q)_{\infty}(q;q)_n}dy, \: y\geq0,
\end{eqnarray*}
where
\begin{eqnarray*}
(a;q)_n = \prod_{k=1}^{n-1}(1-aq^j),\:(q;q)_{\infty}= \prod_{k=1}^{\infty}(1-aq^j)
\end{eqnarray*}
and its Mellin transform is given, for any $s>0$, by
\begin{eqnarray*}
\E[I_{\phi}^{s}]
&=&\frac{\Gamma(1+s)(q^{1+s};q)_{\infty}}{(q;q)_{\infty}}
\end{eqnarray*}
The image of $\xi$ by the mapping $\mathcal{T}_{\beta}$ is simply a compound Poisson process with parameter  $c$ and jumps which are exponentially distributed on $(0,1)$ with parameter $\beta$, i.e.~$\mathcal{T}_{\beta}\phi(u) = \frac{u}{u+\beta} (-\log(q)(1-e^{-(u+\beta)}))$. Thus, we obtain that the law of $I_{\mathcal{T}_{\beta}\phi}$ has an absolute continuous density given by
\begin{eqnarray*}
f_{\mathcal{T}_{\beta}\phi}(dx)= \frac{x^{\beta}}{\E[I_{\phi}^{\beta}]}\sum_{n=0}^{\infty} e^{-x/q^n}(-1)^n\frac{q^{\frac{n(n-1)}{2}}}{(q;q)_{\infty}(q;q)_n}dx, \: x>0.
\end{eqnarray*}
To conclude this example, we mention that Bertoin et al.~\cite{Bertoin-Biane-Yor-04} show that the distribution of the random variable $L_{\phi}$ defined, for any bounded Borel function $f$, by
\begin{eqnarray*}
\E[f(L_{\phi})]
&=&\frac{1}{\E[I^{-1}_{\phi}]}\E[I^{-1}_{\phi}f(I'_{\phi}I^{-1}_{\phi})]
\end{eqnarray*}
with $I'_{\phi}$ an independent copy of $I_{\phi}$,
shares the same moments than the log normal distribution. It is not difficult to check that such transformation applied to $I_{\mathcal{T}_{\beta}\phi}$ does not yield the same properties.
\end{exa}

\begin{exa}[Killed compound Poisson process with exponential jumps]\rm
Let $\xi$ be a compound Poisson process of parameter $c>0$  with exponential jumps of mean $b^{-1}>0$ and killed at a rate $\kappa\geq0$. Its Laplace exponent has the form $\phi(u)=c\frac{u}{u+b}+\kappa$ and  its L\'evy measure is given by $\nu(dr)=cbe^{-br}dr,\:r>0$. We obtain from \eqref{eq:mom_sub} that
\begin{eqnarray*}
\E[I_{\phi}^{n}]
&=&\frac{n!\Gamma(n+b+1)\Gamma(\kappa_b+1)}{((\kappa+c))^n\Gamma(b+1)\Gamma(n+\kappa_b+1)}
\end{eqnarray*}
where we have set $\kappa_b=\frac{\kappa}{\kappa+c}b$.
Then, noting that $b-\kappa_b>0$, we get the identity in distribution
\begin{eqnarray*}
I_{\phi}\stackrel{(d)}{=} ((\kappa+c))^{-1}{\bf{e}}_1 B(\kappa_b+1,b-\kappa_b)
\end{eqnarray*}
where $B(a,b)$ stands for a Beta random variable of parameters $a,b>0$ and the random variables on the right hand side are considered to be independent.  The case $\kappa=0$ was considered by Carmona et al.~\cite{Carmona-Petit-Yor-97}. Finally,  observing that, for any $\beta\geq0$,  $\mathcal{T}_{\beta}\phi(u)=c\frac{u}{u+b+\beta}+\kappa\frac{u}{u+\beta}$ and  its L\'evy measure is $\nu_{\beta}(dr)= e^{-\beta r}(c(b+\beta)e^{-br}+\kappa\beta)dr,\:r>0$, we deduce from Theorem \ref{thm:exp_sub}, the identity
\begin{eqnarray*}
I_{\mathcal{T}_{\beta}\phi}\stackrel{(d)}{=} (a(\kappa+1))^{-1}G(\beta+1) B(\kappa_b+\beta+1,b-\kappa_b)
\end{eqnarray*}
where $G(a)$ is an independent  Gamma random variable with parameter $a>0$.
\end{exa}

\begin{exa}[The $\alpha$-stable subordinator]\rm
Let us consider, for $0<\alpha<1$, $ \phi( u) =u^{\alpha}, \: u \geq 0$ and in this case $\nu(dr)=\frac{\alpha r^{-(\alpha+1)}}{\Gamma(1-\alpha)}dr, r>0$. The law of $I_{\phi}$ has been characterized by Carmona et al.~\cite[Example E and Proposition 3.4]{Carmona-Petit-Yor-97}. More precisely, they show that the random variable $Z= \log I_{\phi} $ is self-decomposable and admits the following L\'evy-Khintchine representation
\begin{eqnarray*}
\log \E[e^{iuZ}]&=&\log(\Gamma(1+iu))^{1-\alpha}\\
&=& (1-\alpha)\left(-iu E_{\gamma} + \int^{0}_{-\infty} (e^{ius}-ius-1) \frac{e^{s}}{|s|(1-e^s)}ds\right)
\end{eqnarray*}
where $E_{\gamma}$ denotes the Euler constant. First, note that  $\mathcal{T}_{\beta}\phi(u)=(u+\beta)^{\alpha}-\beta(u+\beta)^{\alpha-1}$ and $\nu_{\beta}(dr)= \frac{1}{\Gamma(1-\alpha)}e^{-\beta r}r^{-(\alpha+1)}(\alpha+\beta r)dr, r>0$.
Thus, writing $Z^{\beta}=\log( I_{\mathcal{T}_{\beta}\phi})$, we obtain
\begin{eqnarray*}
\log \E[e^{iuZ^{\beta}}]&=&\log\left(\frac{\Gamma(1+\beta + iu)}{\Gamma(1+\beta)}\right)^{1-\alpha}\\
&=& (1-\alpha)\left(iu \Upsilon(1+\beta) + \int^{0}_{-\infty} (e^{ius}-ius-1) \frac{e^{(1+\beta)s}}{|s|(1-e^s)}ds\right)
\end{eqnarray*}
where $\Upsilon$ stands for the digamma function, $\Upsilon(z)=\frac{\Gamma'(z)}{\Gamma(z)}$. Observing that $\lim_{\alpha \rightarrow 0} \mathcal{T}_{\beta}\phi(u) = \frac{u}{u+\beta}$, by passing to the limit in the previous identity we recover the previous example.

\end{exa}

\begin{exa}[The Lamperti-stable subordinator]\rm
Now let  $ \phi( u) =\phi_0(u)=
(\alpha u)_{\alpha},  u \geq 0$, with  $0<\alpha<1$. This example is treated by Bertoin and Yor in \cite{Bertoin-Yor-05}. They obtain
\begin{eqnarray*}
I_{\phi}\stackrel{(d)}{=} {\bf e}_1{\bf e}^{-\alpha}_1.
\end{eqnarray*}
Hence, with $\mathcal{T}_{ \beta}\phi(u)=\frac{u}{u+\beta}(\alpha (u+\beta))_{\alpha}$, we get
\begin{eqnarray*}
I_{\mathcal{T}_{\beta}\phi}\stackrel{(d)}{=} G(\beta+1)G(\beta+1)^{-\alpha}.
\end{eqnarray*}

\end{exa}

\subsection{The spectrally negative case}
Let us assume now that $\xi$ is a spectrally negative L\'evy process.  We recall that if $\E[\xi_1]<0$, then there exits $\theta>0$ such that
\begin{eqnarray}\label{eq:lap-levy_s}
\E[e^{\theta \xi_1}]=1
\end{eqnarray}
and we write $\psi_{\theta}(u)=\psi(u+\theta)$.   We proceed by  mentioning  that, in this setting, Bertoin and Yor \cite{Bertoin-Yor-02} determined the law of $I_{\psi}$  by computing its negative entire moments as
follows. If $\psi(0)=0$ and $\psi'(0^+)>0$, then, for any integer $n\geq1$,
\begin{eqnarray} \label{eq:m_sn}
\E[I_{\psi}^{-n}] &=& \psi'(0^+)\frac{\prod_{k=1}^{n-1}\psi(k)}{\Gamma(n)},
\end{eqnarray}
with the convention that $\E[I_{\psi}^{-1}] =
\psi'(0^+)$. Next, it is easily seen that  the strong Markov property for L\'evy processes yields, for any $a>0$,
\[I_{\psi} \stackrel{(d)}{=} \int_0^{T_a}e^{-\xi_s}ds + e^{-a}I'_{\psi}\]
where $T_a=\inf\{s>0; \: \xi_s\geq a\}$ and $I'_{\psi}$ is an independent copy of $I_{\psi}$, see e.g.~\cite{Rivero-05}. Consequently, $I_{\psi}$ is a positive self-decomposable random variable and thus its law is absolutely continuous with an unimodal density, see Sato \cite{Sato-99}. We  still denote its density by $f_{\psi}(x)$. Before stating our next result, we introduce the so-called Erd\'elyi-Kober operator of the first kind and  we refer to the monograph of Kilbas et al.~\cite{Kilbas-06} for background on  fractional operators. It is defined, for a smooth function $f,$ by
\begin{eqnarray*}
\mathbb{D}^{\alpha,\delta}f(x) &=& \frac{x^{-\alpha-\delta}}{\Gamma(\delta)}\int_0^xr^{\alpha}(x-r)^{\delta-1}f(r)dr,\quad x>0,
\end{eqnarray*}
where $\Re(\delta)>0$ and $\Re(\alpha)>0$. Note that this operator can be expressed in terms of the Markov kernel associated to a Beta random variable. Indeed, after performing a change of variable, we obtain
\begin{eqnarray*}
\mathbb{D}^{\alpha,\delta}f(x) &=& \frac{\Gamma(\alpha+1)}{\Gamma(\alpha+\delta+1)} \E\left[f\left(B(\alpha+1,\delta)x\right)\right]
\end{eqnarray*}
which motivates the following notation
\begin{eqnarray} \label{eq:defd}
\mathbf{D}^{\alpha,\delta}f(x) &=& \frac{\Gamma(\alpha+\delta+1)}{\Gamma(\alpha+1)}\mathbb{D}^{\alpha,\delta}f(x).
\end{eqnarray}

\begin{thm}
\begin{enumerate}
\item If $\psi'(0^+)>0$, then for any $\beta>0$, we have
\begin{eqnarray}
f_{\mathcal{T}_{\beta}\psi}(x) &=& \frac{x^{-\beta}f_{\psi}(x)}{\E[I_{\psi}^{-\beta}]},\: x>0.
\label{*}
\end{eqnarray}
In particular, $I_{\psi}$ is the length-biased distribution of $I_{\mathcal{T}_{1}\psi}$.
\item Assume that $\psi'(0^+)<0$. Then, for any $0<\delta<\theta$,  we have
\begin{eqnarray} \label{eq:f_ex}
I_{\mathcal{T}_{\delta,\theta}\psi} &\stackrel{(d)}{=}&
B^{-1}(\theta-\delta,\delta) I_{\psi_{\theta}}.
\end{eqnarray}
 This identity reads in terms of the Erd\'elyi-Kober operator as follows
 \begin{eqnarray*} \label{eq:d_ex}
f_{\mathcal{T}_{\delta,\theta}\psi}(x)=\mathbf{D}^{\theta-\delta-1,\delta}f_{\psi_{\theta}}(x),\: x>0.
\end{eqnarray*}
In particular, we have
\begin{eqnarray} \label{eq:d_a}
f_{\mathcal{T}_{\delta,\theta}\psi}(x) \sim \frac{ \Gamma(\theta)}{\Gamma(\delta)\Gamma(\theta-\delta)}\E[I^{\theta-\delta}_{\psi_{\theta}}] \: x^{\delta-\theta-1} \quad \textrm{as}  \quad x\rightarrow \infty,
\end{eqnarray}
($f(t)\sim g(t) $ as $t\rightarrow a$ means that $\lim_{t\rightarrow a}\frac{f(t)}{g(t)}=1$ for any $a \in [0,\infty]$).
Combining the two previous results, we obtain, for any $0<\delta<\theta$ and $\beta\geq 0$,
\begin{eqnarray*}
f_{\mathcal{T}^{\beta}_{\delta,\theta}\psi}(x) &=& \frac{x^{-\beta}}{\E[I_{\mathcal{T}_{\delta,\theta}\psi}^{-\beta}]}\mathbf{D}^{\theta-\delta-1,\delta}f_{\psi_{\theta}}(x),\: x>0
\end{eqnarray*}
and
\begin{eqnarray*}
f_{\mathcal{T}^{\beta}_{\delta,\theta}\psi}(x) \sim \frac{\Gamma(\theta)}{\Gamma(\theta-\delta)\Gamma(\delta)}\frac{
\E[I_{\psi_{\theta}}^{-\beta}]}{\E[I^{\theta-\delta}_{\psi_{\theta}}]}\: x^{\delta-\theta-\beta-1} \quad \textrm{as}  \quad x\rightarrow \infty .
\end{eqnarray*}

\end{enumerate}
\end{thm}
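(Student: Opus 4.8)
The plan is to identify, in each case, the law of the transformed exponential functional through its negative moments, in the spirit of the proof of \refthm{thm:exp_sub} but using the spectrally negative moment formula \eqref{eq:m_sn}. The two facts I would lean on throughout are: (a) when $\psi(0)=0$ and $\psi'(0^+)>0$, all the negative moments $\E[I_\psi^{-s}]$, $s>0$, are finite, they satisfy the recursion $\E[I_\psi^{-s}]=\psi(s-1)\E[I_\psi^{-(s-1)}]/(s-1)$ for $s>1$ (which for $s=n$ integer reduces to \eqref{eq:m_sn}), and the law of $I_\psi$ is moment determinate (Bertoin and Yor \cite{Bertoin-Yor-02}); and (b) Proposition~\ref{prop:mapping}, which guarantees that $\mathcal{T}_\beta\psi$, $\mathcal{T}_{\delta,\theta}\psi$ and $\mathcal{T}^\beta_{\delta,\theta}\psi$ are again Laplace exponents of spectrally negative L\'evy processes with no killing, so that (a) applies to them once their behaviour at $0$ has been checked; as for $I_\psi$, each of these exponential functionals is self-decomposable, hence absolutely continuous, so the densities in the statement are meaningful.

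For the first assertion, I would first observe that $\mathcal{T}_\beta\psi(u)=u\psi(u+\beta)/(u+\beta)$ gives $\mathcal{T}_\beta\psi(0)=0$ and $(\mathcal{T}_\beta\psi)'(0^+)=\psi(\beta)/\beta>0$ (here $\psi(\beta)>0$ by convexity and $\psi'(0^+)>0$), so that \eqref{eq:m_sn} is available for $\mathcal{T}_\beta\psi$. Substituting $\mathcal{T}_\beta\psi(k)=\tfrac{k}{k+\beta}\psi(k+\beta)$ and collapsing the products of Gamma factors yields $\E[I_{\mathcal{T}_\beta\psi}^{-n}]=\Gamma(\beta)\prod_{k=0}^{n-1}\psi(\beta+k)/\Gamma(n+\beta)$. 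Independently, iterating the recursion in (a) from order $\beta+n$ down to order $\beta$ --- every intermediate order being $>1$ because $\beta>0$ --- gives $\E[I_\psi^{-(\beta+n)}]=\Gamma(\beta)\,\E[I_\psi^{-\beta}]\prod_{k=0}^{n-1}\psi(\beta+k)/\Gamma(n+\beta)$. Comparing the two shows $\E[I_{\mathcal{T}_\beta\psi}^{-n}]=\E[I_\psi^{-(\beta+n)}]/\E[I_\psi^{-\beta}]$ for every $n\ge1$; these are precisely the negative integer moments of the probability density $x^{-\beta}f_\psi(x)/\E[I_\psi^{-\beta}]$ (well defined since $\E[I_\psi^{-\beta}]<\infty$), so moment determinacy of $I_{\mathcal{T}_\beta\psi}$ gives \eqref{*}. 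The length-biased statement is the case $\beta=1$, using $\E[I_\psi^{-1}]=\psi'(0^+)$ and hence $\E[I_{\mathcal{T}_1\psi}]=1/\psi'(0^+)$.

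For the second assertion, since $\psi(\theta)=0$ we have $\mathcal{T}_{\delta,\theta}\psi(u)=\tfrac{u+\theta-\delta}{u+\theta}\psi_\theta(u)$, and I would record that both $\psi_\theta$ and $\mathcal{T}_{\delta,\theta}\psi$ vanish at $0$ with $\psi_\theta'(0^+)=\psi'(\theta)>0$ and $(\mathcal{T}_{\delta,\theta}\psi)'(0^+)=\tfrac{\theta-\delta}{\theta}\psi'(\theta)>0$ (using $0<\delta<\theta$), so \eqref{eq:m_sn} applies to each. Taking the ratio of the two moment formulas and simplifying gives $\E[I_{\mathcal{T}_{\delta,\theta}\psi}^{-n}]/\E[I_{\psi_\theta}^{-n}]=\prod_{k=0}^{n-1}\tfrac{k+\theta-\delta}{k+\theta}=\Gamma(\theta)\Gamma(n+\theta-\delta)/\big(\Gamma(\theta-\delta)\Gamma(n+\theta)\big)$, which is exactly $\E[B(\theta-\delta,\delta)^{n}]$; hence $\E[I_{\mathcal{T}_{\delta,\theta}\psi}^{-n}]=\E\big[(B^{-1}(\theta-\delta,\delta)I_{\psi_\theta})^{-n}\big]$ for all $n\ge1$ (the two factors independent), and moment determinacy of $I_{\mathcal{T}_{\delta,\theta}\psi}$ yields \eqref{eq:f_ex}. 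Writing out the density of $B^{-1}(\theta-\delta,\delta)I_{\psi_\theta}$ and changing variables recasts \eqref{eq:f_ex} in the Erd\'elyi-Kober form through the Beta-kernel representation \eqref{eq:defd} of the operator, and letting $x\to\infty$ in that integral --- a routine Abelian argument, using finiteness of $\E[I_{\psi_\theta}^{\theta-\delta}]$ --- produces \eqref{eq:d_a}. The combined formulas then follow by applying the first assertion to $\mathcal{T}_{\delta,\theta}\psi$ (legitimate, its derivative at $0^+$ being positive) in view of $\mathcal{T}^\beta_{\delta,\theta}=\mathcal{T}_\beta\circ\mathcal{T}_{\delta,\theta}$, and by inserting the extra factor $x^{-\beta}$ in the tail \eqref{eq:d_a}.

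The only point where genuine care is needed is checking that $\mathcal{T}_\beta\psi$, $\mathcal{T}_{\delta,\theta}\psi$ and $\psi_\theta$ really lie in the class for which \eqref{eq:m_sn}, the real-order recursion and --- above all --- moment determinacy hold; this is exactly where Proposition~\ref{prop:mapping} and the elementary value/derivative computations at the origin enter. The determinacy is inherited from \cite{Bertoin-Yor-02}: the at most quadratic growth of a spectrally negative Laplace exponent forces $\E[I^{-n}]$ to grow no faster than $c^{\,n}\,n!$, so $1/I$ meets Carleman's criterion. Everything else --- the Gamma-function algebra, the change of variables into Erd\'elyi-Kober form, and the Abelian tail estimate --- is routine.
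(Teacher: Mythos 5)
Your proposal is correct and follows essentially the same route as the paper: both arguments rest on the real\mbox{-}order moment recursion $\E[I_{\psi}^{-(s+1)}]=\tfrac{\psi(s)}{s}\E[I_{\psi}^{-s}]$ (which the paper derives from the Bertoin--Yor factorization $I_{\psi}/I_{\phi}\stackrel{(d)}{=}e^{-M}\mathbf{e}_1$) together with the integer\mbox{-}moment formula \eqref{eq:m_sn} and moment determinacy, then obtain the Erd\'elyi--Kober form from the Beta kernel and the tail asymptotics by the same Abelian limit. The only cosmetic difference is that you iterate the recursion into closed form and compare, where the paper invokes uniqueness of the solution of the recursive equation directly.
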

\begin{remark}\rm
We point out, that under the Cr\'amer condition, Rivero \cite[Lemma 4]{Rivero-05} obtains the following large asymptotic for the tail of the distribution
\begin{equation} \label{eq:ar}
\P\left(I_{\Psi}>x\right) \sim C x^{-\theta} \quad \textrm{as}  \quad x\rightarrow \infty,
\end{equation}
where $C>0$ and $\Psi(\theta)=0$ with $\theta>0$ and $\Psi$ being here the Laplace exponent of a non-lattice L\'evy process. Rivero has a characterization
of the constant $C$ only in the case $\theta<1$. In our setting, it is plain that, for any $\beta\geq0$, $\mathcal{T}^{\beta}_{\delta,\theta}\psi(\delta-\theta-\beta)=0$. Moreover, $f_{\mathcal{T}^{\beta}_{\delta,\theta}\psi}(x)$ is unimodal and  hence ultimately monotone at infinity, therefore our asymptotic results, up to the constant characterization,  could have been also  deduced from \eqref{eq:ar}.
\end{remark}

\begin{proof}
We start by  recalling the following identity due to Bertoin and Yor \cite{Bertoin-96-b}
\begin{eqnarray*}
I_{\psi}/I_{\phi} &\stackrel{(d)}{=} &e^{-M} {\bf e}_1
\end{eqnarray*}
where $\phi(u)= \psi(u)/u, u\geq0$, that is $\phi$ is the Laplace exponent of the ladder height process of the dual L\'evy process,  $M=\sup_{t\geq0}\{-\xi_t\}$ is the overall maximum of the dual L\'evy process and the random variables are considered to be independent. Thus, recalling that $\E[e^{-sM}]=\psi'(0^+)/\phi(s)$, we have, for any $s>0$,
\begin{eqnarray}
\E[I_{\psi}^{-(s+1)}] &=& \frac{\psi'(0^+)\Gamma(s+2)}{\phi(s+1)\E[I_{\phi}^{s+1}]}
\nonumber \\
&=&\frac{\psi'(0^+)\Gamma(s+1)}{\E[I_{\phi}^{s}]} \nonumber \\
&= & \frac{\psi(s)}{s}\E[I_{\psi}^{-s}] \label{eq:rr}
\end{eqnarray}
where we have used the recurrence relationship satisfied by $I_{\phi}$, see \eqref{eq:rec_sub}. Similarly to the case of subordinators,  we have, for any $s>0$,
\begin{eqnarray*}
\E[I_{\mathcal{T}_{\beta}\psi}^{-(s+1)}] &=& \frac{\mathcal{T}_{\beta}\psi(s)}{s}\E[I_{\mathcal{T}_{\beta}\psi}^{-s}]
\\
&=&\frac{\psi(s+\beta)}{s+\beta}\E[I_{\mathcal{T}_{\beta}\psi}^{-s}]
\end{eqnarray*}
and
\begin{eqnarray*}
\E[I_{\psi}^{-(s+\beta+1)}] &=& \frac{\psi(s+\beta)}{s+\beta}\E[I_{\psi}^{-(s+\beta+1)}].
\end{eqnarray*}
The first claim follows. Next, we have both $\psi'_{\theta}(0^+)>0$ and $\mathcal{T}_{\delta,\theta}\psi'(0^+)=\frac{\theta-\delta}{\theta}\psi'_{\theta}(0^+)>0$ as $\delta<\theta$. Thus,  the random variables $I_{\psi_{\theta}}$ and $I_{\mathcal{T}_{\delta,\theta}\psi}$ are well defined. Moreover, from \eqref{eq:m_sn}, we get, for any integer $n\geq1$,
\begin{eqnarray*}
\E[I_{\mathcal{T}_{\delta,\theta}\psi}^{-n}] &=&
\frac{\psi'(\theta)(\theta-\delta)}{\theta}\frac{\prod_{k=1}^{n-1}\mathcal{T}_{\delta,\theta}\psi(k)}{\Gamma(n)}\\
&=&
\frac{\psi'(\theta)(\theta-\delta)}{\theta}\frac{\prod_{k=1}^{n-1}\frac{k+\theta-\delta}{k+\theta}\psi_{\theta}(k)}{\Gamma(n)}\\
&=&
\psi'(\theta)\frac{\Gamma(n+\theta-\delta)\Gamma(\theta)}{\Gamma(\theta-\delta)\Gamma(n+\theta)}
\frac{\prod_{k=1}^{n-1}\psi_{\theta}(k)}{\Gamma(n)}.
\end{eqnarray*}
The identity \eqref{eq:f_ex}   follows by moments identification. Then, we use this identity to get, for any $x>0$,
\begin{eqnarray*}
f_{\mathcal{T}_{\delta,\theta}\psi}(x) &=& \frac{\Gamma(\theta)}{\Gamma(\delta)\Gamma(\theta-\delta)}\int_0^1r^{\theta-\delta-1}(1-r)^{\delta-1}f_{\psi_{\theta}}(xr)dr\\ &=& \frac{x^{-\theta}\Gamma(\theta)}{\Gamma(\delta)\Gamma(\theta-\delta)}\int_0^xu^{\theta-\delta-1}(x-u)^{\delta-1}f_{\psi_{\theta}}(u)du \\
&=& \mathbf{D}^{\theta-\delta-1,\delta}f(x).
\end{eqnarray*}
 Next, we  deduce readily from \eqref{eq:rr} that the mapping $s\mapsto \E[I_{\psi_{\theta}}^{-s}]$ is analytic in the right-half plane $\Re(s)>-\theta$. In particular, for any $0<\delta<\theta$, we have $\E[I^{\theta-\delta}_{\psi_{\theta}}] <\infty$. Then, the large asymptotic behavior of the density is obtained by observing that
\begin{eqnarray*}
f_{\mathcal{T}_{\delta,\theta}\psi}(x) &=&  \frac{x^{\delta-\theta-1}\Gamma(\theta)}{\Gamma(\delta)\Gamma(\theta-\delta)}\int_0^xu^{\theta-\delta}(1-u/x)^{\delta-1}f_{\psi_{\theta}}(u)du\\
&\sim&  \frac{x^{\delta-\theta-1}\Gamma(\theta)}{\Gamma(\delta)\Gamma(\theta-\delta)}\int_0^{\infty}u^{\theta-\delta}f_{\psi_{\theta}}(u)du \quad \textrm{ as } x\rightarrow \infty,
\end{eqnarray*}
which completes the proof.
\end{proof}

\begin{exa}[the spectrally negative Lamperti-stable process]\rm
Let us consider the Lamperti-stable process, i.e., for $1<\alpha<2$, $ \psi(u) =
((\alpha-1) (u-1))_{\alpha}, \quad u \geq 0$. Recall that $\psi(1)=0$, $\psi_1(u)=((\alpha-1) u)_{\alpha}$ and that this example is investigated by Patie \cite{Patie-06-poch}.
We get
\begin{eqnarray*}
\E[I^{-n}_{\psi_1}]
&=&
\psi'(0^+)\frac{\Gamma((\alpha-1)n+1)}{\Gamma(\alpha+1)}.
\end{eqnarray*}
Thus, $I_{\psi_1}= \mathbf{e}_1^{-(\alpha-1)}$. Then, for any $0<\delta<1$,
\begin{eqnarray*}
I_{\mathcal{T}_{\delta,\theta}\psi}  &\stackrel{(d)}{=}&
B(1-\delta,\delta)^{-1}\mathbf{e}_1^{-(\alpha-1)},
\end{eqnarray*}
and for any $\beta>0$
\begin{eqnarray*}
I_{\mathcal{T}^{\beta}_{\delta,\theta}\psi}  &\stackrel{(d)}{=}&
B(1+\beta-\delta,\delta)^{-1} G(\beta+1)^{-(\alpha-1)}.
\end{eqnarray*}

\end{exa}
\section{Entrance laws and intertwining relations of pssMp}
In this part, we show that the transformations $\mathcal{T}_{\delta,\beta}$  appear in the study of the entrance law of pssMps. Moreover, also they prove to be useful for the elaboration of intertwining relations between the semigroups of spectrally negative pssMps. We recall that the Markov kernel   $\Lambda$  associated to the positive random variable $V$ is  the multiplicative kernel defined, for a bounded Borelian function $f$, by
\begin{eqnarray*}
\Lambda f(x)=\E[f(Vx)].
\end{eqnarray*}
Then, we say that two Markov semigroups $P_t$ and $Q_t$ are intertwined by the Markov kernel $\Lambda$ if
\begin{eqnarray} \label{eq:def_int}
P_t \Lambda = \Lambda Q_t,\quad t\geq0.
\end{eqnarray}
We refer to the two papers of Carmona et al.~\cite{Carmona-Petit-Yor-94} and  \cite{Carmona-Petit-Yor-98} for a very nice account of intertwining relationships. In particular, they show, by means of the Beta-Gamma algebra, that the semigroup of Bessel processes and the one of the so-called self-similar saw tooth processes are intertwined by the Gamma kernel. Below, we provide alternative examples of such relations for a large class of pssMps with stability index $1$.  Recall that the latter processes were defined in Section \ref{introsection}. We also mention  that for any $\alpha>-1,\delta>0$, the linear operator $\mathbf{D}^{\alpha,\delta}$ defined  in \eqref{eq:defd} is an instance of a Markov kernel which is, in this case, associated to the Beta random variable $B(\alpha+1,\delta)$.
In what follows, when $X$ is associated through the Lamperti mapping to a spectrally negative L\'evy process with Laplace exponent $\psi$, we denote by $P_t^{\psi}$  its corresponding semigroup.  When $\min(\psi(0),\psi'(0^+))<0$ and $\theta<1$, then $P_t^{\psi}$ stands for the semigroup of the unique recurrent extension leaving the boundary point $0$ continuously.  Using the self-similarity property of $X$, we introduce the positive random variable defined, for any bounded borelian function $f$, by
\[ P_t^{\psi}f(0)=\E[f(tJ_{\psi})].\]

Recall that Bertoin and Yor \cite{Bertoin-Yor-02-b} showed that, when $\psi'(0^+)\geq0$, the random variable $J_{\psi}$ is moment-determinate with
\begin{eqnarray} \label{eq:mom_el}
 \E\left[J^{n}_{\psi}\right] &=&  \frac{\prod_{k=1}^{n}\psi(k)}{\Gamma(n+1)},\quad  n=1,2\ldots
\end{eqnarray}

Before stating the new intertwining relations, we provide some further information concerning the entrance law of pssMps. In particular, we show that the expression \eqref{eq:mom_el} of the integer moments  still holds for the entrance law of the unique continuous recurrent extension, i.e.~when $\min\left(\psi(0),\psi'(0^+)\right)<0$ with $\theta<1$. We emphasize that we  consider  both cases when the process $X$ reaches $0$ continuously and by a jump.
\begin{prop}
Let us assume that $\min\left(\psi(0),\psi'(0^+)\right)<0$ with $\theta<1$. Then, we have the following identity in distribution
\begin{equation}
J_{\psi} \stackrel{(d)}{=} B\left(1-\theta,\theta\right)/I_{\mathcal{T}_{1-\theta}\psi_{\theta}}
\end{equation}
where $B(a,b)$  is taken independent of the random variable $I_{\mathcal{T}_{1-\theta}\psi_{\theta}}$.
  Moreover, the entrance law of the unique recurrent extension which  leaves $0$ continuously a.s. is determined by its positive entire moments as follows
\begin{eqnarray*}
 \E\left[J^{n}_{\psi}\right] &=&  \frac{\prod_{k=1}^{n}\psi(k)}{\Gamma(n+1)},\quad  n=1,2\ldots
\end{eqnarray*}
\end{prop}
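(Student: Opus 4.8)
The plan is to identify the entrance law $J_\psi$ of the recurrent extension, by excursion theory, in terms of the entrance law of the non-hitting pssMp obtained from $\psi$ by an Esscher transform, and then to deduce both assertions from two elementary moment computations together with a moment-determinacy argument.

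\emph{Set-up.} I would first pass to $\psi_\theta:=\psi(\cdot+\theta)$. Since $\theta\in(0,1)$ is the largest zero of the convex function $\psi$ and $\min(\psi(0),\psi'(0^+))<0$, one has $\psi_\theta(0)=\psi(\theta)=0$ and $\psi_\theta'(0^+)=\psi'(\theta)>0$; hence the exponential martingale with density $e^{\theta\xi_t}$ (resp.\ $e^{\theta\xi_t+\kappa t}$ if $\psi(0)=-\kappa<0$), which is a martingale precisely because $\psi(\theta)=0$, turns $\xi$ into a spectrally negative L\'evy process with no killing and positive mean, and correspondingly turns $X$ into the honest pssMp $X^{(\theta)}$ with exponent $\psi_\theta$, which does not hit $0$. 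Its entrance law $J_{\psi_\theta}$ is the classical one, with $\E[J_{\psi_\theta}^m]=\prod_{k=1}^m\psi_\theta(k)/\Gamma(m+1)$ by \eqref{eq:mom_el}, and (recalling that for an index-$1$ pssMp the Lamperti representation gives $\mathcal{L}_{X^{(\theta)}}(x^s)=\psi_\theta(s)x^{s-1}$) the self-similarity $\E_0[(X_t^{(\theta)})^s]=t^s\E[J_{\psi_\theta}^s]$ yields $\E[J_{\psi_\theta}^{s}]=\frac{\psi_\theta(s)}{s}\E[J_{\psi_\theta}^{s-1}]$ for every real $s$ for which both sides are finite; this applies with $s=j-\theta$, $j\ge 1$, since then $\psi_\theta(j-\theta)=\psi(j)>0$ (and $\E[J_{\psi_\theta}^{-\theta}]<\infty$, e.g.\ by a Cram\'er-type tail estimate for $J_{\psi_\theta}$).

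\emph{The excursion step.} Because Cram\'er's condition holds with $\theta\in(0,1)$, Rivero \cite{Rivero-05} (see also \cite{Fitz}) guarantees that $X$ admits a unique recurrent extension $\mathbb X$ leaving $0$ continuously; $0$ is then regular and instantaneous, and by $1$-self-similarity its inverse local time is a stable subordinator of index $\theta$, whose potential measure is $U(\d s)=c_1 s^{\theta-1}\d s$. Moreover $x\mapsto x^\theta$ is invariant for $X$ killed at $0$ (again Cram\'er's condition, read through the Lamperti map), with $X^{(\theta)}$ the associated Doob $h$-transform; hence the It\^o excursion measure $\underline n$ of $\mathbb X$ satisfies $\underline n\big(f(\epsilon_u)\,\mathbf{1}_{\{u<\zeta\}}\big)=c_2\,\E\big[(uJ_{\psi_\theta})^{-\theta}f(uJ_{\psi_\theta})\big]$. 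Inserting this and $U$ into the last-exit decomposition $P^\psi_t f(0)=\int_{[0,t]}U(\d s)\,\underline n\big(f(\epsilon_{t-s})\mathbf{1}_{\{t-s<\zeta\}}\big)$ and substituting $s=tv$ turns the $\d s$-integral into the Beta integral $c_1c_2\int_0^1 v^{\theta-1}(1-v)^{-\theta}\,\E\big[J_{\psi_\theta}^{-\theta}f\big(t(1-v)J_{\psi_\theta}\big)\big]\,\d v$; normalising via $f\equiv 1$ I expect to obtain
\[
P^\psi_t f(0)=\frac{\E\big[J_{\psi_\theta}^{-\theta}\,f\big(t\,B(1-\theta,\theta)\,J_{\psi_\theta}\big)\big]}{\E\big[J_{\psi_\theta}^{-\theta}\big]},\qquad B(1-\theta,\theta)\text{ independent of }J_{\psi_\theta},
\]
that is, $J_\psi\stackrel{(d)}{=}B(1-\theta,\theta)\,\widetilde J$, where $\widetilde J$ denotes the $J_{\psi_\theta}^{-\theta}$-biased version of $J_{\psi_\theta}$ and $B(1-\theta,\theta)$ is independent of $\widetilde J$.

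\emph{Conclusion.} Iterating the recursion for $J_{\psi_\theta}$ from exponent $-\theta$ up to $n-\theta$ gives $\E[\widetilde J^{\,n}]=\E[J_{\psi_\theta}^{n-\theta}]/\E[J_{\psi_\theta}^{-\theta}]=\prod_{j=1}^n\frac{\psi_\theta(j-\theta)}{j-\theta}=\frac{\Gamma(1-\theta)}{\Gamma(n+1-\theta)}\prod_{j=1}^n\psi(j)$, whence $\E[J_\psi^n]=\E[B(1-\theta,\theta)^n]\,\E[\widetilde J^{\,n}]=\frac{\Gamma(n+1-\theta)}{\Gamma(1-\theta)\Gamma(n+1)}\cdot\frac{\Gamma(1-\theta)}{\Gamma(n+1-\theta)}\prod_{j=1}^n\psi(j)=\prod_{j=1}^n\psi(j)/\Gamma(n+1)$, which is the second assertion; moreover $\psi(k)=O(k^2)$ makes these moments $O(C^n n!)$, so Carleman's criterion applies and the moment sequence is determinate. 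Finally, $\mathcal{T}_{1-\theta}\psi_\theta(u)=\frac{u}{u+1-\theta}\psi(u+1)$ vanishes at $0$ with derivative $\psi(1)/(1-\theta)>0$ there, so \eqref{eq:rr} applied iteratively gives $\E[I_{\mathcal{T}_{1-\theta}\psi_\theta}^{-n}]=\prod_{j=1}^n\frac{\psi(j)}{j-\theta}=\E[\widetilde J^{\,n}]$; by moment-determinacy $\widetilde J\stackrel{(d)}{=}1/I_{\mathcal{T}_{1-\theta}\psi_\theta}$, which yields the first assertion.

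\emph{Main obstacle.} The hard part will be the excursion step: one has to extract from Rivero's construction both that the inverse local time is stable of index $\theta$ and that the excursion measure is the $h^{-1}$-transform ($h(x)=x^\theta$) of the entrance law of $X^{(\theta)}$, and then track the self-similar exponents through the Beta integral carefully enough that the weight which appears is exactly the $\mathrm{Beta}(1-\theta,\theta)$ law --- this is where $\theta<1$, needed for the integrability of $v^{\theta-1}(1-v)^{-\theta}$ on $(0,1)$, is used. A secondary point is to treat uniformly the case where $X$ reaches $0$ continuously ($\psi(0)=0$, $\psi'(0^+)<0$) and the case where it reaches $0$ by a jump ($\psi(0)<0$), but the Esscher reduction removes killing and turns the drift positive in one stroke, so past that point the argument does not see the distinction. (For $n\ge2$ the identity $\E[J_\psi^n]=\prod_{k=1}^n\psi(k)/\Gamma(n+1)$ may alternatively be read off directly from $\mathcal{L}_{\mathbb X}(x^n)=\psi(n)x^{n-1}$ and self-similarity --- the boundary contribution at $0$ being absent for $n\ge2$ --- modulo justifying $\E_0[X_t^n]<\infty$; only $n=1$ would then need the excursion input.)
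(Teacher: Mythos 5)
Your proposal reaches the same two identities by what is, at bottom, the same excursion-theoretic input, but packaged differently, and it is essentially correct. The paper does not rebuild the last-exit decomposition at all: it quotes Rivero's Proposition~3, which already gives the $q$-potential of the entrance law in the form $\int_0^\infty e^{-qt}\E[f(tJ_\psi)]\,dt=\frac{q^{-\theta}}{C_\theta}\int_0^\infty f(u)\E[e^{-quI_{\psi_\theta}}]u^{-\theta}du$ with $C_\theta=\Gamma(1-\theta)\E[I_{\psi_\theta}^{\theta-1}]$, then tests against $f(x)=x^s$ for $s$ on the imaginary axis to read off the Mellin transform $\E[J_\psi^s]$ as a ratio of Gamma functions times $\E[I_{\psi_\theta}^{-s+\theta-1}]/\E[I_{\psi_\theta}^{\theta-1}]$, and identifies this with the Mellin transform of $B(1-\theta,\theta)/I_{\mathcal{T}_{1-\theta}\psi_\theta}$ using \eqref{*}; the integer-moment formula then drops out from \eqref{eq:m_sn} and the Gamma recurrence. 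What you call the ``main obstacle'' --- the stable-$\theta$ inverse local time plus the $h$-transform description of the excursion entrance law, assembled into the Beta integral --- is precisely the content of that cited proposition (your time-domain formula is its Laplace inversion, after translating $I_{\psi_\theta}$ into $J_{\psi_\theta}$ via $\E[f(J_{\psi_\theta})]=\E[I_{\psi_\theta}^{-1}f(I_{\psi_\theta}^{-1})]/\E[I_{\psi_\theta}^{-1}]$), so you should import it rather than re-derive it; as written, that step is only a sketch (``I expect to obtain''). Past that point your argument is sound and buys a little extra transparency: working with $J_{\psi_\theta}$ and the real-variable recursion $\E[J_{\psi_\theta}^{s}]=\frac{\psi_\theta(s)}{s}\E[J_{\psi_\theta}^{s-1}]$ at the shifted arguments $s=j-\theta$ makes the cancellation of the Beta moments against $\Gamma(n+1-\theta)/\Gamma(1-\theta)$ completely explicit, whereas the paper routes the same cancellation through \eqref{*} and \eqref{eq:m_sn}. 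Two small points to tighten: justify the recursion at non-integer $s$ (the paper avoids this by using the analytic identity \eqref{eq:rr} for $I_{\psi_\theta}$, derived from the Bertoin--Yor factorization) and the finiteness of $\E[J_{\psi_\theta}^{-\theta}]$, which follows from $\E[I_{\psi_\theta}^{\theta-1}]<\infty$ by log-convexity of Mellin moments between the finite values at $-1$ and $0$; your Carleman argument for moment-determinacy of $1/I_{\mathcal{T}_{1-\theta}\psi_\theta}$ is fine and is needed in the paper's proof as well, where it is left implicit in the phrase ``moments identification''.
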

\begin{proof}
We start by recalling that Rivero \cite[Proposition 3]{Rivero-05} showed that the $q$-potential  of the entrance law of the continuous recurrent extension is given, for a  continuous function $f$, by
\begin{equation*}
\int_0^{\infty}e^{-qt} \E\left[f(tJ_{\psi})\right]dt = \frac{q^{-\theta}}{ C_{\theta}}\int_0^{\infty} f(u) \E\left[e^{-quI_{\psi_{\theta}}}\right] u^{-\theta}du
\end{equation*}
where we have used the self-similarity property of $X$ and set $C_{\theta}=\Gamma(1-\theta)\E\left[ I_{\psi_{\theta}}^{\theta-1}\right]$.
Performing the change of variable $t=uI_{\psi_{\theta}}$ on the right hand side of the previous identity, one gets
\begin{equation*}
\int_0^{\infty}e^{-qt} \E\left[f(tJ_{\psi})\right]dt = \frac{q^{-\theta}}{ C_{\theta}}\int_0^{\infty} e^{-qt}\E\left[f(tI_{\psi_{\theta}}^{-1}) I_{\psi_{\theta}}^{\theta-1}\right]t^{-\theta}dt.
\end{equation*}
Choosing $f(x)=x^{s}$ for some $s \in i\R$ the imaginary line, and using again the self-similarity property, we get
\begin{equation*}
\int_0^{\infty}e^{-qt}t^{s} dt \: \E[J_{\psi}^{s}]= \Gamma\left(s+1\right)q^{-s-1}  \E[J_{\psi}^{s}]
\end{equation*}
where we have used the integral representation of the Gamma function $\Gamma(z)=\int_0^{\infty}e^{-t}t^{z} dt, \Re(z)>-1$.
Moreover, by performing a change of variable, we obtain
\begin{equation}
\int_0^{\infty}t^{s}\E[e^{-qt I_{\psi_{\theta}}}] t^{-1-\theta}dt = q^{-s-\theta-1} \Gamma\left(s-\theta+1\right)\E[I_{\psi_{\theta}}^{-s-\theta-1}].
\end{equation}
Putting the pieces together, we deduce that
\begin{equation} \label{eq:m_el}
 \E[J_{\psi}^{s}]= \frac{\Gamma\left(s-\theta+1\right)}{\Gamma\left(s+1\right)\Gamma\left(1-\frac{\theta}{\alpha}\right)}\frac{\E[I_{\psi_{\theta}}^{-s-\theta-1}]  }{\E[I_{\psi_{\theta}}^{\theta-1}]}
\end{equation}
and the proof of the first claim is completed by moments identification. Next,   we have
\begin{eqnarray*}
 \E\left[J^{n}_{\psi}\right] &=& \frac{ \Gamma(n+1-\theta)}{\Gamma(n+1)\Gamma(1-\theta)\E\left[ I_{\psi_{\theta}}^{\theta-1}\right]} \E\left[ I_{\psi_{\theta}}^{-n+\theta-1}\right] \\
&=& \frac{ \Gamma(n+1-\theta)}{\Gamma(n+1)\Gamma(1-\theta)} \E\left[ I_{\mathcal{T}_{1-\theta}\psi_{\theta}}^{-n}\right] \\
&=& \frac{ \Gamma(n+1-\theta)}{\Gamma(n+1)\Gamma(1-\theta)}\frac{\psi(1)}{1-\theta} \frac{\prod_{k=1}^{n-1}\frac{k}{k+1-\theta}\psi(k+1)}{\Gamma(n)}\\
&=& \frac{ \Gamma(n+1-\theta)\Gamma(n)\Gamma(2-\theta)}{\Gamma(n+1-\theta)\Gamma(n+1)\Gamma(1-\theta)}\frac{\psi(1)}{1-\theta} \frac{\prod_{k=1}^{n-1}\psi(k+1)}{\Gamma(n)}\\
&=&  \frac{\prod_{k=1}^{n}\psi(k)}{\Gamma(n+1)}
\end{eqnarray*}
where we have used, from the second identity, successively the identities \eqref{eq:m_el}, \eqref{*}, \eqref{eq:m_sn} and the recurrence relation of the gamma function. We point out  that under the condition $\theta<1$, $\psi(k)>0$ for any integer $k\geq1$. The proof of the Proposition is then completed.
\end{proof}

Before stating our next result, we recall a criteria given by Carmona et al.~\cite[Proposition 3.2]{Carmona-Petit-Yor-94} for  establishing intertwining relations between pssMps.
If $f$ and $g $ are functions of $C_{0}(\R^+)$, the space of continuous functions vanishing at infinity, satisfying the condition:
\begin{equation} \label{eq:cdi}
\forall t\geq0, \quad P_tf(0)=P_tg(0) \quad  \textrm{then } f=g.
\end{equation}
Then the identity \eqref{eq:def_int} is equivalent to the assertion, for all $f \in C_{0}(\R^+)$,
\[P_1\Lambda f(0) = Q_1f(0).\]
Finally, we introduce the following notation, for any $s\in \mathbb{C},$
\[\mathcal{M}_{\psi}(s)=\E\left[J^{\:is}_{\psi}\right].\]
\begin{thm} \label{thm:int}
\begin{enumerate}
\item Assume that $\psi'(0^+)<0$ and $\mathcal{M}_{\psi_{\theta}}(s)\neq 0$ for any $s\in \R$. Then, for any $\delta<\theta+1$,  we have the following intertwining relationship
\begin{eqnarray*}
P^{\psi_{\theta}}_t \mathbf{D}^{\theta,\delta} = \mathbf{D}^{\theta,\delta} P^{\mathcal{T}_{\delta,\theta}\psi}_t,\quad t\geq0
\end{eqnarray*}
and the following factorization
\begin{eqnarray} \label{eq:id1}
J_{\mathcal{T}_{\delta,\theta}\psi} &\stackrel{(d)}{=}&
B(1+\theta-\delta,\delta) J_{\psi_{\theta}}
\end{eqnarray}
holds.
\item Assume that  $\min\left(\psi(0),\psi'(0^+)\right)<0$ with $\theta<1$ and $\mathcal{M}_{\mathcal{T}_{-\theta}\psi_{\theta}}(s)\neq 0$ for any $s\in \R$. Then, we have the following intertwining relationship
\begin{eqnarray*}
P^{\mathcal{T}_{-\theta}\psi_{\theta}}_t \mathbf{D}^{1,\theta} = \mathbf{D}^{1,\theta} P^{\psi}_t,\quad t\geq0
\end{eqnarray*}
and
\begin{eqnarray*}
J_{\psi} &\stackrel{(d)}{=}&
B(1-\theta,\theta) J_{\mathcal{T}_{-\theta}\psi_{\theta}}.
\end{eqnarray*}
\item Finally, assume that $\psi'(0^+)=0$ and $\mathcal{M}_{\psi}(s)\neq 0$ for any $s\in \R$. Then, for any $\delta<1$,  we have the following intertwining relationship
\begin{eqnarray*}
P^{\psi}_t \mathbf{D}^{1,\delta} = \mathbf{D}^{1,\delta} P^{\mathcal{T}_{\delta,0}\psi}_t,\quad t\geq0
\end{eqnarray*}
and
\begin{eqnarray*}
 J_{\mathcal{T}_{\delta,0}\psi} &\stackrel{(d)}{=}&
B(1-\delta,\delta)J_{\psi}.
\end{eqnarray*}
\end{enumerate}
\end{thm}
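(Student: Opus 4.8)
The plan is to exploit the criterion of Carmona et al.~recalled just above the statement: to establish an intertwining $P_t^{(1)}\Lambda = \Lambda P_t^{(2)}$ it suffices, once the nonvanishing hypothesis on the relevant Mellin transform guarantees the separating condition \eqref{eq:cdi}, to check the single time-one identity $P_1^{(1)}\Lambda f(0) = P_1^{(2)} f(0)$ for all $f\in C_0(\R^+)$. Since each $P_t^\psi f(0)=\E[f(tJ_\psi)]$ and each kernel $\mathbf{D}^{\alpha,\delta}$ is the multiplicative kernel of the Beta variable $B(\alpha+1,\delta)$, the whole matter reduces to verifying the three distributional factorizations \eqref{eq:id1} and its analogues: for part~(1), $J_{\mathcal{T}_{\delta,\theta}\psi}\stackrel{(d)}{=} B(1+\theta-\delta,\delta)\,J_{\psi_\theta}$, and similarly for parts~(2) and~(3). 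Indeed, granting the factorization, $P_1^{\psi_\theta}\mathbf{D}^{\theta,\delta}f(0)=\E[f(B(\theta+1,\delta) J_{\psi_\theta})]=\E[f(J_{\mathcal{T}_{\delta,\theta}\psi})]=P_1^{\mathcal{T}_{\delta,\theta}\psi}f(0)$, which is exactly the time-one identity; the general $t$ version then follows from Carmona et al.~and the separation hypothesis. So the core task is purely a moment computation.

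For part~(1) I would prove the factorization by matching positive integer moments, using moment-determinacy of $J_{\mathcal{T}_{\delta,\theta}\psi}$. Here one must be slightly careful about which moment formula for the entrance law applies: if $\psi_\theta'(0^+)>0$ (equivalently $\theta$ is a simple root and $\psi'(\theta)>0$) and $(\mathcal{T}_{\delta,\theta}\psi)'(0^+)=\tfrac{\theta-\delta}{\theta}\psi'_\theta(0^+)$, one uses \eqref{eq:mom_el}; if $\min(\psi(0),\psi'(0^+))<0$ one invokes the Proposition just proved, which extends \eqref{eq:mom_el} to the continuous recurrent extension. In either case, $\E[J^n_{\mathcal{T}_{\delta,\theta}\psi}] = \prod_{k=1}^n \mathcal{T}_{\delta,\theta}\psi(k)/\Gamma(n+1)$, and plugging in $\mathcal{T}_{\delta,\theta}\psi(k)=\tfrac{k+\theta-\delta}{k+\theta}\psi_\theta(k)$ telescopes the correction factor into $\tfrac{\Gamma(n+1+\theta-\delta)\Gamma(1+\theta)}{\Gamma(1+\theta-\delta)\Gamma(n+1+\theta)}$, which is precisely $\E[B(1+\theta-\delta,\delta)^n]$ by the Beta--Gamma algebra. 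Multiplying by $\E[J^n_{\psi_\theta}] = \prod_{k=1}^n\psi_\theta(k)/\Gamma(n+1)$ and using independence gives the claim. Parts~(2) and~(3) are the same computation with different parameters: part~(3) is the $\theta=0$, $\psi'(0^+)=0$ instance (replace $\psi_\theta$ by $\psi$, $\theta$ by $0$, so $\mathcal{T}_{\delta,0}\psi(k)=\tfrac{k-\delta}{k}\psi(k)$ and the Beta becomes $B(1-\delta,\delta)$), and part~(2) is obtained either directly in the same manner or, more cheaply, by reading off the already-established identity $J_\psi\stackrel{(d)}{=}B(1-\theta,\theta)/I_{\mathcal{T}_{1-\theta}\psi_\theta}$ from the Proposition together with the relation between $J_{\mathcal{T}_{-\theta}\psi_\theta}$ and $I_{\mathcal{T}_{-\theta}\psi_\theta}$.

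The main obstacle is not the algebra but the bookkeeping around which entrance-law moment formula is legitimate in each regime, and checking that the separating condition \eqref{eq:cdi} genuinely follows from the stated nonvanishing of $\mathcal{M}_{\psi_\theta}(s)$ (resp.\ $\mathcal{M}_{\mathcal{T}_{-\theta}\psi_\theta}(s)$, $\mathcal{M}_\psi(s)$) on the real line: one needs $t\mapsto\E[f(tJ)]$ to determine $f$, which is an injectivity statement for the multiplicative convolution operator by the law of $J$, and this is exactly where nonvanishing of the Mellin transform of $J$ enters (via Mellin inversion / a Wiener-type argument). Once that is in place the three parts are immediate. I would also remark that the asymptotics and the iterated-transformation identities of the previous section are consistent with \eqref{eq:id1}, but those are not needed for the proof.
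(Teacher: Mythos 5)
Your proposal follows essentially the same route as the paper: reduce the intertwining to the time-one identity via the Carmona et al.\ criterion (with the nonvanishing of the Mellin transform securing the separating condition \eqref{eq:cdi} through injectivity of the multiplicative kernel), and then establish the factorization \eqref{eq:id1} by matching integer moments via \eqref{eq:mom_el}, where the telescoping of $\mathcal{T}_{\delta,\theta}\psi(k)=\frac{k+\theta-\delta}{k+\theta}\psi_{\theta}(k)$ produces exactly the moments of $B(1+\theta-\delta,\delta)$. Your attention to which entrance-law moment formula is legitimate in each regime, and your suggestion to obtain part (2) from the preceding Proposition, are consistent with (and slightly more explicit than) the paper's brief remark that the remaining claims follow by similar reasoning.
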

\begin{proof}
 First, from the self-similarity property, we observe easily that the condition \eqref{eq:cdi} is equivalent to the requirement that the kernel $M_{\psi_{\theta}}$ associated to the positive random variable $J_{\psi_{\theta}}$ is injective.
 Since $\mathcal{M}_{\psi_{\theta}}(s)\neq 0$ for any $s\in\R$, we deduce that the multiplicative kernel $M_{\psi_{\theta}}$ is indeed injective, see e.g.~\cite[Theorem 4.8.4]{Bingham-Goldie-Teugels-89}.
 Next,  note that $\psi'_{\theta}(0^+)>0$ and under the condition $\delta<\theta$, $\mathcal{T}_{\delta,\theta}\psi'(0^+)>0$. Hence, from \eqref{eq:mom_el}, we deduce that, for any $n\geq1$,
\begin{eqnarray*}
\E[J_{\mathcal{T}_{\delta,\theta}\psi}^n] &=&\frac{\Gamma(n+1+\theta-\delta)\Gamma(\theta+1)}{\Gamma(1+\theta-\delta)\Gamma(n+\theta+1)}\E[J_{\psi_{\theta}}^n].
\end{eqnarray*}
The identity \eqref{eq:id1} follows. Both processes being pssMps, the first intertwining relation follows from the criteria given above.
The proof of the Theorem is completed by following similar lines of reasoning for the other claims.
\end{proof}

A nice consequence of the previous result is some interesting relationships between the eigenfunctions of the semigroups of pssMps. Indeed, it is easily seen from the intertwining relation \eqref{eq:def_int} that if a function $f$ is an eigenfunction with eigenvalue $1$ of the semigroup $P_t$ then $\Lambda f$ is an eigenfunction with eigenvalue $1$ of the semigroup $Q_t$.  We proceed by  introducing some notation taken from Patie \cite{Patie-06c}. Set $a_0(\psi)=1$ and define for non-negative integers $n$
\begin{equation*}
a_n(\psi)^{-1}=\prod_{k=1}^n \psi( k).
\end{equation*}
Next, we introduce the entire function $\I_{\psi}$ which admits the series
representation
\begin{equation*}
\I_{\psi}(z)=\sum_{n=0}^{\infty} a_n(\psi) z^{n}, \quad
 z
\in  \C.
\end{equation*}
In \cite[Theorem 1]{Patie-06c}, it is shown that
\begin{equation} \label{eq:eigen}
\mathbf{L}^{\psi} \I_{\psi}(x)=\I_{\psi}(x),\: x>0,
\end{equation}
where, for a smooth function $f$, the linear operator $\mathbf{L}^{\psi}$ is the infinitesimal generator associated to the semigroup $P_t^{\psi}$ and  takes the form
\begin{eqnarray}  \label{eq:inf_ab}
\mathbf{L}^{\psi} f(x) &=&  \frac{\sigma}{2}
xf''(x) + b f'(x)+
x^{-1}\int^{\infty}_{0}f(e^{-r}x)-f(x)+xf'(x)r\mathbb{I}_{\{|r|<1\}}\nu(dr) -\kappa xf(x).
\nonumber
\end{eqnarray}
From the Feller property of the semigroup of $X$, we deduce readily that the identity \eqref{eq:eigen} is equivalent to
\[e^{-t}P_t^{\psi}\I_{\psi}(x)=\I_{\psi}(x), \: t,x\geq0,\]
that is $\I_{\psi}$ is $1$-eigenfunction for $P_t^{\psi}$.  Hence, we deduce from Theorem \ref{thm:int} the following.
\begin{cor}
\begin{enumerate}
\item Let $\psi'(0^+)<0$. Then, for any $\delta<\theta+1$,  we have the following identity
\begin{eqnarray*}
 \mathbf{D}^{\theta,\delta} \I_{\psi_{\theta}}(x)= \I_{\mathcal{T}_{\delta,\theta}\psi}(x).
\end{eqnarray*}
\item If $\psi'(0^+)<0$ and $\theta<1$, then
\begin{eqnarray*}
 \mathbf{D}^{1,\theta} \I_{\mathcal{T}_{-\theta}\psi_{\theta}}(x)= \I_{\psi}(x).
\end{eqnarray*}
\item Finally, if $\psi'(0^+)=0$, then, for any $\delta<1$,  we have
\begin{eqnarray*}
 \mathbf{D}^{1,\delta} \I_{\psi}(x)= \I_{\mathcal{T}_{\delta,0}\psi }(x).
\end{eqnarray*}
\end{enumerate}
\end{cor}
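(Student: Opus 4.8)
The plan is to deduce the Corollary directly from Theorem~\ref{thm:int} by exploiting the fact that $\I_{\psi}$ is the $1$-eigenfunction of the semigroup $P_t^{\psi}$, together with the general principle that intertwining relations transport eigenfunctions. Concretely, recall from \eqref{eq:eigen} and the Feller property that $e^{-t}P_t^{\psi}\I_{\psi} = \I_{\psi}$ for all $t\geq 0$. Now suppose we are in case (1), so $\psi'(0^+)<0$ and $\delta<\theta+1$; by Theorem~\ref{thm:int}(1) we have the intertwining $P_t^{\psi_\theta}\mathbf{D}^{\theta,\delta} = \mathbf{D}^{\theta,\delta} P_t^{\mathcal{T}_{\delta,\theta}\psi}$. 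Applying both sides to $\I_{\mathcal{T}_{\delta,\theta}\psi}$ and using that the latter is the $1$-eigenfunction of $P_t^{\mathcal{T}_{\delta,\theta}\psi}$, we get $e^{-t}P_t^{\psi_\theta}\left(\mathbf{D}^{\theta,\delta}\I_{\mathcal{T}_{\delta,\theta}\psi}\right) = \mathbf{D}^{\theta,\delta}\I_{\mathcal{T}_{\delta,\theta}\psi}$, so $\mathbf{D}^{\theta,\delta}\I_{\mathcal{T}_{\delta,\theta}\psi}$ is a $1$-eigenfunction of $P_t^{\psi_\theta}$. Cases (2) and (3) are handled identically using the corresponding intertwinings from Theorem~\ref{thm:int}(2) and (3).

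The slightly delicate point is that knowing $\mathbf{D}^{\theta,\delta}\I_{\mathcal{T}_{\delta,\theta}\psi}$ is \emph{a} $1$-eigenfunction does not by itself identify it as $\I_{\psi_\theta}$ — one needs a uniqueness statement for the $1$-eigenfunction within the relevant class, or else a direct computation. I would therefore present the argument in the opposite (constructive) direction: start from $\I_{\psi_\theta}(x)=\sum_{n\geq 0} a_n(\psi_\theta) x^n$ and compute $\mathbf{D}^{\theta,\delta}\I_{\psi_\theta}$ termwise. Since $\mathbf{D}^{\theta,\delta}$ is the Markov kernel of $B(\theta+1,\delta)$, we have $\mathbf{D}^{\theta,\delta}(x^n) = \frac{\Gamma(\theta+1+\delta)}{\Gamma(\theta+1)}\cdot\frac{\Gamma(n+\theta+1)}{\Gamma(n+\theta+1+\delta)}\,x^n = \frac{\Gamma(n+\theta+1)\Gamma(\theta+1+\delta)}{\Gamma(\theta+1)\Gamma(n+\theta+1+\delta)}\,x^n$. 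Multiplying by $a_n(\psi_\theta)$ and checking that the resulting coefficients satisfy the recursion $a_{n}/a_{n-1} = 1/\mathcal{T}_{\delta,\theta}\psi(n)$ — which follows from $\mathcal{T}_{\delta,\theta}\psi(n) = \frac{n+\theta-\delta}{n+\theta}\psi_\theta(n)$ as already used in the proof of the preceding Theorem — identifies the series as $\I_{\mathcal{T}_{\delta,\theta}\psi}$, up to the normalization $a_0=1$ which is immediate since $\mathbf{D}^{\theta,\delta}$ fixes constants. This termwise manipulation is legitimate because $\I_{\psi_\theta}$ is entire and the kernel acts by a bounded multiplicative factor on each monomial.

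Alternatively, and more in the spirit of the paragraph preceding the Corollary, one can simply invoke the factorizations of Theorem~\ref{thm:int} at the level of Mellin transforms: from $J_{\mathcal{T}_{\delta,\theta}\psi}\stackrel{(d)}{=}B(1+\theta-\delta,\delta)J_{\psi_\theta}$ one reads off $\mathcal{M}_{\mathcal{T}_{\delta,\theta}\psi}(s) = \frac{\Gamma(1+\theta-\delta+is)}{\Gamma(1+\theta-\delta)}\cdot\frac{\Gamma(1+\theta)}{\Gamma(1+\theta+is)}\,\mathcal{M}_{\psi_\theta}(s)$, and comparing moments of order $n$ of $J_{\mathcal{T}_{\delta,\theta}\psi}$ with the coefficients $a_n(\mathcal{T}_{\delta,\theta}\psi)$ via \eqref{eq:mom_el} gives the coefficient identity directly. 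Either route makes the main obstacle a bookkeeping matter rather than a conceptual one; the only thing to be careful about is ensuring the index constraints ($\delta<\theta+1$ in (1), $\theta<1$ in (2), $\delta<1$ in (3)) are exactly what is needed for the relevant $J$-variable to be well defined and moment-determinate so that the Mellin/moment identification is valid. I would write out case (1) in full and remark that (2) and (3) follow by the same argument applied to the respective intertwinings in Theorem~\ref{thm:int}.
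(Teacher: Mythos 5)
Your first paragraph is, in substance, the paper's entire proof: the authors simply remark that an intertwining $P_t\Lambda=\Lambda Q_t$ transports $1$-eigenfunctions and then write that the Corollary is ``deduced from Theorem \ref{thm:int}.'' Your observation that this only shows $\mathbf{D}^{\theta,\delta}\I_{\mathcal{T}_{\delta,\theta}\psi}$ is \emph{a} $1$-eigenfunction, and that a uniqueness statement is needed to identify it, is a fair criticism of that argument (the injectivity of the Mellin kernel assumed in Theorem \ref{thm:int} is presumably what is meant to supply it, but the paper does not say so). Note, however, that the intertwining transports the eigenfunction of the \emph{right-hand} semigroup: from $P^{\psi_\theta}_t\mathbf{D}^{\theta,\delta}=\mathbf{D}^{\theta,\delta}P^{\mathcal{T}_{\delta,\theta}\psi}_t$ one can only conclude, modulo uniqueness, that $\mathbf{D}^{\theta,\delta}\I_{\mathcal{T}_{\delta,\theta}\psi}=c\,\I_{\psi_\theta}$, which is the transpose of the identity you set out to prove. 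You do not reconcile this with the statement of the Corollary.

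The genuine gap is in the step you defer as ``bookkeeping.'' Carrying out your termwise computation: $\mathbf{D}^{\theta,\delta}$ is the kernel of $B(\theta+1,\delta)$, so it multiplies the coefficient of $x^n$ by
\[
\E\bigl[B(\theta+1,\delta)^n\bigr]=\frac{\Gamma(n+\theta+1)\,\Gamma(\theta+1+\delta)}{\Gamma(\theta+1)\,\Gamma(n+\theta+1+\delta)},
\]
whereas, from $\mathcal{T}_{\delta,\theta}\psi(k)=\frac{k+\theta-\delta}{k+\theta}\psi_\theta(k)$, one finds
\[
a_n(\mathcal{T}_{\delta,\theta}\psi)=\frac{\Gamma(n+\theta+1)\,\Gamma(\theta+1-\delta)}{\Gamma(\theta+1)\,\Gamma(n+\theta+1-\delta)}\,a_n(\psi_\theta).
\]
These multipliers carry opposite signs of $\delta$ and agree only when $\delta=0$, so the recursion check you promise does not close. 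What the computation actually establishes is $\mathbf{D}^{\theta-\delta,\delta}\I_{\mathcal{T}_{\delta,\theta}\psi}=\I_{\psi_\theta}$, i.e.\ the kernel of $B(\theta+1-\delta,\delta)$ applied to the \emph{other} eigenfunction --- which is exactly the direction the intertwining argument produces, and is consistent with the factorization $J_{\mathcal{T}_{\delta,\theta}\psi}\stackrel{(d)}{=}B(1+\theta-\delta,\delta)J_{\psi_\theta}$ once one recalls that $a_n(\psi)$ is the reciprocal of $\prod_{k=1}^n\psi(k)$ while $\E[J_\psi^n]$ is proportional to the product itself, so a Beta factor on the $J$-side becomes a division on the $a_n$-side. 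Your two routes are therefore aimed at two different, mutually transposed identities; the deferred verification is precisely where this surfaces, and it must be confronted (and the kernel indices and the side on which the kernel acts fixed accordingly) before either write-up constitutes a proof.
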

We illustrate this last result by detailing some interesting instances of such relationships between some known special functions.
\begin{exa}[Mittag-Leffler type functions]\rm
Let us consider, for any $1<\alpha<2$, the Laplace exponent $\psi(u) = (\alpha(u-1/\alpha))_\alpha$. We easily check that $\theta = 1/\alpha$ and  we have $ \psi_{1/\alpha}(u) =
(\alpha u)_{\alpha}$. Observing that
\[\prod_{k=1}^n \psi_{1/\alpha}(k)=\prod_{k=1}^n (\alpha k)_{\alpha}=\frac{\Gamma\left(\alpha(n+1)\right)}{\Gamma(\alpha)},\: n\geq1, \]
and using the fact that the random variable $J_{\psi_{\theta}}$ is moment-determinate, we readily check, from the expression \eqref{eq:mom_el}, that
\[ \mathcal{M}_{\psi_{\theta}}(s)= \frac{\Gamma(\alpha(is+1))}{\Gamma(\alpha) \Gamma(is+1)} \]
The pole of the gamma function being the negative integers, the condition $\mathcal{M}_{\psi_{\theta}}(s)\neq 0$  is satisfied for any $s\in \R$. Moreover, we obtain
\begin{equation*}
\I_{\psi_{1/\alpha}}(x)= \Gamma(\alpha)\mathcal{E}_{\alpha,\alpha}(x)
\end{equation*}
where we recall that the Mittag-Leffler function $\mathcal{E}_{\alpha,\alpha}$ is defined in the example \ref{ex:sp}. Next, for any $\delta<1+1/\alpha$, we have
\begin{eqnarray*}
\I_{\mathcal{T}_{\delta,1/\alpha}\psi}(x)&=&  \frac{\Gamma(\alpha)\Gamma(1/\alpha+1-\delta)}{\Gamma(1/\alpha+1)}\sum_{n=0}^{\infty} \frac{\Gamma(n+1/\alpha+1)}{\Gamma(n+1/\alpha+1-\delta)\Gamma(\alpha n +\alpha)} x^{ n} \\
  &=&
{}_2F_2\left( \left.\begin{array}{c}
                  (1,1/\alpha+1),\left(1,1
\right) \nonumber \\
                  (1,1/\alpha+1-\delta), (\alpha,\alpha)
                \end{array} \right|
  x\right)
,
\end{eqnarray*}
where  ${}_2F_2$ is the Wright hypergeometric function, see e.g.~Braaksma \cite[Chap.
12]{Braaksma-64}.
Hence, we have
\begin{equation*}
\Gamma(\alpha)\mathbf{D}^{\theta,\delta} \mathcal{E}_{\alpha,\alpha}(x)= {}_2F_2\left( \left.\begin{array}{c}
                  (1,1/\alpha+1),\left(1,1
\right) \nonumber \\
                  (1,1/\alpha+1-\delta), (\alpha,\alpha)
                \end{array} \right|
  x\right).
\end{equation*}
\end{exa}

\begin{exa}\rm
Now, for any $1<\alpha<2$, we set $ \psi(u) =
u^{\alpha}$ and we  note that $\psi'(0^+)=0$. Proceeding as in the previous example, we get
\[ \mathcal{M}_{\psi}(s)= \Gamma^{\alpha-1}(is+1) \]
 and hence the condition $\mathcal{M}_{\psi}(s)\neq 0$  is satisfied for any $s\in \R$.   We have
\begin{equation*}
\I_{\psi}(x)= \sum_{n=0}^{\infty}\frac{1}{\Gamma^{\alpha}(n+1)}x^n
\end{equation*}
and, for any $\delta<1$, we write
\begin{equation*}
\I_{\mathcal{T}_{\delta,0}\psi}(x)= \Gamma(1-\delta)\sum_{n=0}^{\infty}\frac{\Gamma(n+1)}{\Gamma(n+1-\delta)\Gamma^{\alpha}(n+1)}x^n.
\end{equation*}
Consequently,
\begin{equation*}
\mathbf{D}^{1,\delta}\I_{\psi}(x) = \I_{\mathcal{T}_{\delta,0}\psi}(x).
\end{equation*}
\end{exa}

\end{document}